\theoremstyle{definition}
\newtheorem{thm}{Theorem}[section]
\newtheorem{deff}[thm]{Definition}
\newtheorem{prop}[thm]{Proposition}
\newtheorem{cor}[thm]{Corollary}
\newtheorem{rem}[thm]{Remark}
\newcommand{\zahl}{\mathbb{Z}}
\newcommand{\cpx}{\mathbb{C}}
\newcommand{\ratmap}{\dashrightarrow}
\newcommand{\sym}{\mathfrak{S}}
\newcommand{\pone}{\mathbb{P}^1}
\newcommand{\proj}{\mathbb{P}}
\newcommand{\fct}[1]{\operatorname{#1}}
\newcommand{\cat}[1]{\mathcal{#1}}
\newcommand{\shf}[1]{\mathcal{#1}}
\newcommand{\oshf}{\shf{O}}
\newcommand{\ishf}{\shf{I}}
\newcommand{\lshf}{\shf{L}}
\newcommand{\id}{\fct{id}}
\newcommand{\dyn}{\fct{Dyn}}
\newcommand{\ccor}{\fct{Corr}}
\newcommand{\glt}{\fct{GL}_2}
\newcommand{\slt}{\fct{SL}_2}
\newcommand{\pglt}{\fct{PGL}_2}
\newcommand{\af}{\mathbb{A}}
\newcommand{\aff}{\mathbb{A}}
\newcommand{\gmult}{\mathbb{G}_m}
\title{Dynamical Systems of Correspondences on the Projective Line I: Moduli Spaces and Multiplier Maps}
\author{Rin Gotou\thanks{Department of Mathematics, Graduate School of Osaka University \texttt{u661233h@ecs.osaka-u.ac.jp}}}
\date{}
\begin{document}
\maketitle
\begin{abstract} We consider moduli spaces of dynamical systems of correspondences over the projective line as a generalization of moduli spaces of dynamical systems of endomorphisms on the projective line. We obtain the rationality of the moduli spaces. The rationality of the moduli space of degree $(d,e)$ correspondences is obtained by constructing a projection to the one for the usual dynamical systems of degree $d+e-1$ representation-theoretically.
We also show that the multiplier maps for the fixed points and the multiplier index theorem (Woods Hole formula) are also reduced through the projection and obtain the reduced form explicitly.
\end{abstract}
\section{Introduction}
%Endomorphisms on the projective line are well-studied objects in the theory of dynamical systems. For an endomorphism $f : \pone \to \pone$, many invariants have been constructed to investigate behavior of the endomorphism $f$ and its iterations $f^n = \overbrace{f \circ f \circ \cdots \circ f}^{n}$ (\cite{Sil2}). Many properties of $f$ and its iterations $f^n$ are preserved under   the conjugation $f^g := g \circ f \circ g^{-1}$ by an automorphism $g : \pone \to \pone$. Therefore we sometime consider a dynamical system $f : \pone \to \pone$ up to the conjugations by $\fct{Aut}(\pone ) \simeq \pglt$. Moduli spaces of dynamical systems over $\pone$, which parameterizes endomorphisms up to the conjugations, are considered in \cite{Sil}.

Moduli spaces of dynamical systems over the projective line $\pone$, which parameterizes endomorphisms up to the conjugations by the automorphisms on $\pone$, are considered in \cite{Sil} by using geometric invariant theory (GIT, \cite{MF}). 

Self-correspondence is a generalization of endomorphism. Some important concepts on a dynamical system of endomorphism have natural generalization for a dynamical system of (self-)correspondence. An example is Woods Hole formula, which is originally stated for correspondence in \cite{A-B} and \cite{Ill} and have used for dynamical system of self-maps from \cite{Ue}. Other examples are the canonical measure and the canonical height, they are originally stated for self-map and generalized to correspondence in \cite{D-K-W} and \cite{In} respectively.

In this paper, we construct moduli spaces for dynamical systems of correspondences on the projective line as an analogue of \cite{Sil}.
%\begin{rem} 
%\end{rem}
%Typical ones are invariant functions over $\pone$, under the action of $f$, for example:
%\begin{itemize}
%\item Fixed point, a point $x \in \pone $ such that $f (x) = x$.
%\item Periodic point, a point $x \in \pone $ such that $f^n (x) = x$. Periodic points are classified by the multiplier, which is the value of the derivative $(f^n)'(x)$.
%\item Canonical measure, a probability measure $\mu$ over $\pone (\cpx )$ such that $f^* \mu = d \mu$ for some constant $d$, here $d$ is called the dynamical degree of $f$.
%\item Canonical height, a height function $h$ over $\pone (K)$ for a number field $K$ such that $f^* h = d h $ for some constant $d$, here $d$ is called the arithmetic degree of $f$.
%\end{itemize}
%These concepts have been constructed for more general endomorphisms $f : X \to X$ over some spaces. We focus on that these concepts were also generalized for some well-behaved self-correspondences.

% for example, Lefschetz Fixed-point theorem (\cite{A-B}, \cite{Ill}), canonical measure and dynamical degree (\cite{D-K-W}) and canonical height (\cite{In}).

We firstly construct the moduli space $\ccor_{d,e}$ of correspondences of degree $(d,e)$, which parameterizes the closed subschemes $C \subset \pone_x \times \pone_y $ defined by an equation $\sum_{i=0}^d \sum_{j = 0}^e a_{ij}x^iy^j = 0 $. We consider the diagonal action of $\fct{Aut}(\pone) \simeq \pglt$ on $\pone \times \pone$, which is equivalent to the conjugation action on the graph variety (\cite{Sch}) and give characterization of stable points and semistable points respectively. The loci were given in \cite{Sil} for the case $d=1$, that is, the case of moduli spaces of rational maps. We obtain a simple generalization for our moduli spaces of correspondences.
\begin{thm} The point of $\ccor_{d,e}$ which represents a correspondence $C$ is a stable point (resp. a semistable point) if and only if $C$ has no point of multiplicity $\geq \frac{d+e}{2}$ (resp. of multiplicity $> \frac{d+e}{2}$) on the diagonal of $\pone \times \pone$.
\end{thm}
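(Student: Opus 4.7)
The plan is to apply the Hilbert--Mumford numerical criterion to the diagonal $\pglt$-action on $\proj H^0(\pone \times \pone, \oshf(d,e))$, where a correspondence $C \in \ccor_{d,e}$ is represented by its defining section $F = \sum_{i,j} a_{ij} x_0^i x_1^{d-i} y_0^j y_1^{e-j}$. Stability (resp.\ semistability) of $[F]$ amounts to the condition that, for every nontrivial one-parameter subgroup $\lambda$ of $\pglt$, the $\lambda$-weights of the monomials appearing in $F$ contain some strictly negative (resp.\ non-positive) value.

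Every nontrivial 1-PS of $\pglt$ is, up to positive rescaling and conjugation, of the form $\lambda_0(t) = \fct{diag}(t, t^{-1})$, whose fixed points on $\pone$ are $0$ and $\infty$; conjugation by $g \in \pglt$ moves this fixed pair to an arbitrary ordered pair of distinct points of $\pone$. It therefore suffices to handle $\lambda_0$: a direct computation gives the $\lambda_0$-weight of the monomial $x_0^i x_1^{d-i} y_0^j y_1^{e-j}$ as $(2i - d) + (2j - e) = 2(i+j) - (d+e)$, which depends only on $i+j$ because $\lambda_0$ acts identically on the two factors.

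The translation to multiplicity is then immediate: the minimum $\lambda_0$-weight among monomials of $F$ equals $2 \min\{i+j : a_{ij} \neq 0\} - (d+e)$, and $\min\{i+j : a_{ij} \neq 0\}$ is precisely the multiplicity of $C$ at the diagonal point $(0,0)$, read off from the Newton polygon in the affine chart $x_1 = y_1 = 1$. The HM inequality for $\lambda_0$ therefore reads $\fct{mult}_{(0,0)} C < (d+e)/2$ (resp.\ $\leq$). Varying the conjugating $g$ transports this to the same inequality at every diagonal point $(p,p)$, and the opposite 1-PS $\lambda_0^{-1}$ yields the analogous statement at $(\infty,\infty)$, already covered.

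I expect the main subtlety to be verifying that the off-diagonal fixed points $(p,q)$ with $p \neq q$ of the diagonal $\lambda$-action on $\pone \times \pone$ impose no extra constraint. This holds precisely because the weight $2(i+j)-(d+e)$ depends only on the sum $i+j$, so its extrema are attained at the corners of the Newton polygon in the direction $i+j = \text{const}$; these correspond to the diagonal fixed points $(0,0)$ and $(\infty,\infty)$, whereas the off-diagonal fixed points---whose local multiplicities involve $\min\{i+(e-j)\}$ and $\min\{(d-i)+j\}$---do not influence this weight bound. Auxiliary care is needed in choosing a linearization compatible with descent from $\slt$ to $\pglt$ and in tracking signs in Mumford's $\mu$-function.
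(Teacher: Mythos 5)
Your proposal is correct and takes essentially the same route as the paper: reduce to the standard one-parameter subgroup $\fct{diag}(t,t^{-1})$ up to conjugation, compute that the monomial indexed by $(i,j)$ has weight $\pm\left(2(i+j)-(d+e)\right)$, and apply the Hilbert--Mumford numerical criterion, with the translation to multiplicity at a diagonal point exactly as you describe. The paper records the intermediate step as the existence of an $\slt$-conjugate coefficient matrix vanishing for $i+j<\frac{d+e}{2}$ (resp.\ $i+j\leq\frac{d+e}{2}$), which is precisely your Newton-polygon reading of the multiplicity.
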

\begin{cor} $\ccor_{d,e}^{ss} = \ccor_{d,e}^{s}$ if and only if $d+e$ is odd.
\end{cor}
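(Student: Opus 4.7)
The plan is to derive the corollary directly from the multiplicity criterion in the theorem. Since stability implies semistability, the inclusion $\ccor_{d,e}^{s} \subseteq \ccor_{d,e}^{ss}$ is an equality if and only if no correspondence is semistable without being stable; by the theorem this is equivalent to the assertion that no bidegree $(d,e)$ correspondence carries a diagonal point of multiplicity \emph{exactly} $(d+e)/2$. I would split the argument into the two parities of $d+e$ according to this reformulation.

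When $d+e$ is odd, $(d+e)/2 \notin \zahl$, whereas the multiplicity of a closed point on a subscheme of $\pone \times \pone$ is a non-negative integer. Hence for any such multiplicity $m$ the inequalities $m \geq (d+e)/2$ and $m > (d+e)/2$ are equivalent, so the two criteria in the theorem collapse and $\ccor_{d,e}^{ss} = \ccor_{d,e}^{s}$.

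When $d+e$ is even, set $k := (d+e)/2$; I would exhibit an explicit correspondence with a diagonal point of multiplicity exactly $k$. Assuming $d \leq e$ (the opposite case is symmetric), take $C = V(F)$ with $F = X_0^d Y_0^{e-k} Y_1^k$, which is bihomogeneous of bidegree $(d,e)$ since $d \leq e$ forces $k \leq e$. In the affine chart $X_0 = Y_0 = 1$ the equation reduces to $y^k$, so the diagonal point $(0,0)$ sits on $C$ with multiplicity exactly $k$; a parallel check in the complementary chart produces a second diagonal point at $([0:1],[0:1])$ of multiplicity $d+(e-k) = k$, and these are the only diagonal points of $C$. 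Since no diagonal multiplicity exceeds $k$, the theorem forces $C$ to be semistable but not stable, so the two loci differ strictly.

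The only step requiring real attention is the construction of the example in the ``only if'' direction: one must pin down the multiplicity at every diagonal point to confirm that the borderline value $k$ is attained but never exceeded. This chart-by-chart verification is routine, but it is the only non-trivial ingredient beyond the theorem itself.
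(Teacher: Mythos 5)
Your proof is correct and follows the route the paper intends: the corollary is stated as an immediate consequence of the multiplicity criterion, with the odd case settled by integrality of multiplicities and the even case by exhibiting a strictly semistable point. Your explicit example $X_0^dY_0^{e-k}Y_1^k$ (with its two diagonal points each of multiplicity exactly $k=(d+e)/2$) correctly supplies the witness that the paper leaves implicit.
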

%given by the equation $\sum_{i=0}^d \sum_{j = 0}^e a_{ij}x^iy^j = 0$ such that $a_{ij} = 0$ for all $i,j$ satisfying $i+j < \frac{d+e}{2}$ (resp. $i+j \leq \frac{d+e}{2}$).
\begin{rem} Geometric invariant theory (GIT, \cite{MF}) ensures the existence of a uniform geometrical quotient of the stable locus and a compactification of the quotient as a universal categorical quotient of the semistable locus.
\end{rem}

The compactified moduli of dynamical systems $\dyn_{d,e} := \ccor_{d,e}^{ss} \sslash \pglt $ is constructed as the projective spectrum of a graded invariant ring. Computational invariant theory (CIT, \cite{D-K}) gives precise information for an invariant ring by treating degree-wise actions representation-theoretically before taking the invariant subring. An application of CIT for the moduli space of rational maps is done in \cite{We}.

In this paper, we construct (a $\af^1 \setminus \{ 0 \}$-family of) rational maps \[ \rho_c : \dyn_{d,e} \ratmap \dyn_{1,d+e-1} \]  from representation-theoretic (classically well-known, Clebsch-Gordan) decomposition. The rationality of the moduli $\dyn_{1,d+e-1}$ is shown in \cite{Le} and argument in \cite{Le} also leads that these rational maps $\rho_c$ are generically affine space bundle, therefore we can generalize the rationality result for $\dyn_{d,e}$.
\begin{prop}(Proposition \ref{rational}) $\dyn_{d,e}$ is rational for $d, e \geq 1$ and $(d,e) \neq (1,1)$.
\end{prop}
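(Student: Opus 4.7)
The strategy is to combine the $\gmult$-family of rational maps $\rho_c : \dyn_{d,e} \ratmap \dyn_{1,d+e-1}$ outlined in the paragraph preceding the statement with the rationality of $\dyn_{1,d+e-1}$ established in \cite{Le}. The key point is that each $\rho_c$ descends from an $\slt$-equivariant linear projection of representation spaces whose kernel is itself an $\slt$-stable subspace; hence generic fibers should be affine spaces, so one obtains an affine bundle over a rational base, and rationality propagates.

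Concretely, Clebsch--Gordan yields
\[ V_d \otimes V_e \simeq V_{d+e} \oplus V_{d+e-2} \oplus \cdots \oplus V_{|d-e|}, \]
where $V_n$ denotes the $(n+1)$-dimensional irreducible $\slt$-module. Since $V_1 \otimes V_{d+e-1} \simeq V_{d+e} \oplus V_{d+e-2}$, projecting onto the top two summands gives an $\slt$-equivariant surjection $\pi_c : V_d \otimes V_e \to V_1 \otimes V_{d+e-1}$, with $c \in \gmult$ recording the relative scaling of the two isotypic components, whose kernel $K := V_{d+e-4} \oplus \cdots \oplus V_{|d-e|}$ is $\slt$-stable. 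After projectivising, restricting to the semistable loci, and taking GIT quotients by $\pglt$, one obtains $\rho_c$. The hypothesis $(d,e) \neq (1,1)$ guarantees $d+e-1 \geq 2$, so that Levy's theorem applies to the non-trivial target $\dyn_{1,d+e-1}$.

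The heart of the argument is verifying that $\rho_c$ is generically an $\af^{\dim K}$-bundle. At the level of projective spaces the fiber of $\proj(\pi_c)$ over $[f]$ is $\proj(\cpx f' \oplus K) \setminus \proj(K) \simeq \af^{\dim K}$ for any lift $f'$; and since the $\pglt$-action commutes with $\pi_c$, orbits map to orbits. On a dense open subset where both source and target are stable with trivial stabilizer (using the stability criterion from the first theorem of the excerpt), these set-theoretic fibers should descend to affine spaces on the GIT quotient, producing the advertised affine-bundle structure, from which rationality of $\dyn_{d,e}$ follows from that of $\dyn_{1,d+e-1}$. The main obstacle I anticipate is precisely this last descent: upgrading set-theoretic affine-space fibers to an honest Zariski-locally trivial affine bundle on the GIT quotient, which requires checking that the $\pglt$-action is free on a dense open subset of the preimages and exhibiting an algebraic family of trivializations. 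This is exactly the technical work performed in \cite{Le} for the base case $d=1$, and one expects it to transport to the present equivariant setting without essential modification.
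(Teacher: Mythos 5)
Your proposal follows essentially the same route as the paper: project $V_d\otimes V_e$ onto its top two Clebsch--Gordan summands $V_{d+e}\oplus V_{d+e-2}\simeq V_1\otimes V_{d+e-1}$, observe that the induced map $\dyn_{d,e}\dashrightarrow\dyn_{1,d+e-1}$ has affine-space generic fibers, and conclude from Levy's rationality of $\dyn_{1,d+e-1}$ together with his generic freeness of the $\slt$-action on $V_1\otimes V_D$ for $D\geq 3$. The descent step you worry about is exactly what the paper packages as Proposition \ref{ratfib} (generic freeness on the subrepresentation plus a linear complement gives the fiber $(V/V')^*$, i.e.\ a no-name-lemma argument), and for rationality only this birational product structure is needed, not a Zariski-locally trivial bundle.
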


An application of the moduli space is about an inverse problem, a problem which asks the existence and the classification of dynamical systems with proscribed invariants. An example of such parameter is the set of the multipliers of periodic orbits.

For a dynamical system $f : \pone \to \pone$, we denote the elementary symmetric polynomials of the fixed point multipliers by $\sigma_{k}(f)$, that is, \begin{equation} 1 + \sum_{i = 1}^{d+1} \sigma_{k}(f)t^k =\prod_{x : f(x) = x} \left( 1 + f'(x)t \right)  \label{mlt} \end{equation} for a formal variable $t$. The rational map \[ \lambda_{1,(1,d)}: \dyn_{1,d} \ratmap \proj^{d+1},\ \lambda_{1,(1,d)}([f]) := [1:\sigma_1(f) : \cdots : \sigma_{d+1}(f)] \] is called {\em the fixed point multiplier map}, which is used to solve the rationality of the moduli $\dyn_{1,2}$, (\cite{Mi}, \cite{Sil}), and also to state and to consider inverse problems for multipliers (\cite{Fu1},\cite{Fu2},\cite{H-T},\cite{Mc},\cite{Su}).

A fundamental relation among multipliers was obtained as an application of the Woods Hole Formula (\cite{Sil2},\cite{Ue}) \begin{equation} \sum_{x : f(x) = x} \frac{1}{1-f'(x)} = 1, \end{equation}
or equivalently, 
\begin{equation}
\sum_{i= 0}^{d+1}(-1)^i (d-i)\sigma_i = 0. \label{index}
\end{equation}

In this paper, we construct multiplier maps for correspondence \[\lambda_{n,(d,e)}: \dyn_{d,e} \ratmap \proj^{d+e},\] where we consider the fixed points as the points $\{ z \in \pone | f(z,z) = 0 \}$ for a correspondence defined by $f(x,y) = 0$. The Woods Hole formula for correspondence is obtained in \cite{A-B} and \cite{Ill}, but the reduction using Clebsch-Gordan decomposition gives an alternative proof. We can see that the fixed point multiplier map for correspondences are also reduced through the rational maps $\dyn_{d,e} \ratmap \dyn_{1,d+e-1}$ used to show the rationality of $\fct{Dyn}_{d,e}$, that is,
\begin{prop}(Proposition \ref{unif1}) There exists a projective linear morphism $A \in \fct{Aut} (\proj^{d+e}) = \fct{PGL}_{d+e}$ which makes the following diagram commutative:
\[
\begin{tikzcd} \fct{Dyn}_{d,e} \arrow[rr,dashed]{}{ \lambda_{1,(d,e)} } \arrow[d,dashed] & & \proj^{d+e} \arrow{d}{A} \\
\fct{Dyn}_{1,d+e-1} \arrow[rr,dashed]{}{\lambda_{1,(1,d+e-1)}} & & \proj^{d+e}.
\end{tikzcd}
\]
\end{prop}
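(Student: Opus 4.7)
The plan is to trace the multiplier computation through the Clebsch–Gordan decomposition and show that only the top two summands contribute. Write $V_n$ for the $(n+1)$-dimensional irreducible representation of $\slt$, so bihomogeneous forms of bidegree $(d,e)$ decompose as $V_d \otimes V_e = \bigoplus_{k=0}^{\min(d,e)} V_{d+e-2k}$. Realize this concretely via the standard embedding $V_{d+e-2k} \subset V_{d-k}\otimes V_{e-k} \xrightarrow{\,\cdot\,(x_0 y_1 - x_1 y_0)^k\,} V_d \otimes V_e$ and write $f = \sum_k (x_0 y_1 - x_1 y_0)^k g_k$ accordingly. Because the discriminant $(x_0 y_1 - x_1 y_0)$ vanishes on the diagonal $\Delta$, one has $f|_\Delta = g_0|_\Delta$; applying a single partial derivative before restricting still kills every $k \ge 2$ summand, since those retain at least one factor of the discriminant. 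Consequently, both the fixed-point polynomial $P(z) := f(z,z) \in V_{d+e}$ and every fixed-point multiplier of $f$ depend only on $g_0$ and $g_1$.

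Next, compute the multipliers explicitly. The polarization formula for $V_{d+e} \hookrightarrow V_d \otimes V_e$ gives $(g_0)_x|_\Delta = \tfrac{d}{d+e} P'$ and $(g_0)_y|_\Delta = \tfrac{e}{d+e} P'$, so that at a fixed point $z$,
\[ \mu^f(z) = -\frac{f_x(z,z)}{f_y(z,z)} = -\frac{d\, P'(z) + (d+e)\, g_1(z,z)}{e\, P'(z) - (d+e)\, g_1(z,z)}. \]
The identical argument applied to $F := \rho_c(f) \in V_1 \otimes V_{d+e-1} = V_{d+e} \oplus V_{d+e-2}$ — where, by construction of $\rho_c$, the $V_{d+e}$-component matches $g_0$ and the $V_{d+e-2}$-component equals $\lambda c \cdot g_1$ for a universal scalar $\lambda = \lambda(d,e)$ comparing the $\slt$-equivariant embeddings of $V_{d+e-2}$ — yields
\[ \mu^F(z) = -\frac{P'(z) + (d+e)\lambda c\, g_1(z,z)}{(d+e-1)\, P'(z) - (d+e)\lambda c\, g_1(z,z)}. \]
Eliminating the common parameter $(d+e)\, g_1(z,z)/P'(z)$ between the two expressions produces a single M\"obius relation $\mu^F = \Phi_c(\mu^f)$ whose coefficients depend only on $c, d, e$, independently of $f$ or of the chosen fixed point.

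Finally, the same $\Phi_c$ acts on all $d+e$ multipliers simultaneously. For any M\"obius $\Phi(\mu) = (\alpha\mu+\beta)/(\gamma\mu+\delta)$, the identity
\[ \prod_{i=1}^{d+e}\bigl(t - \Phi(\mu_i)\bigr) = \frac{\sum_{k=0}^{d+e} \sigma_k\,(\gamma t - \alpha)^k (\delta t - \beta)^{d+e-k}}{\sum_{k=0}^{d+e} \sigma_k\,\gamma^k \delta^{d+e-k}} \]
shows that the projective tuple $[1:\sigma_1:\cdots:\sigma_{d+e}]$ transforms by a projective-linear map, whose matrix is the required $A \in \fct{Aut}(\proj^{d+e})$.

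\textbf{Main obstacle.} The substantive step is the explicit multiplier formula: obtaining the rational coefficients $d/(d+e)$ and $e/(d+e)$ requires the polarization expansion of $V_{d+e} \hookrightarrow V_d \otimes V_e$, and tracking the universal scalar $\lambda(d,e)$ between the two embeddings of $V_{d+e-2}$ into $V_d \otimes V_e$ and $V_1 \otimes V_{d+e-1}$ demands careful Schur-type bookkeeping of the normalizations built into $\rho_c$. Once these numerics are pinned down, the M\"obius comparison and its lift to a projective-linear action on the elementary symmetric functions are formal.
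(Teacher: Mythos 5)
Your proposal is correct and follows essentially the same route as the paper: both arguments rest on the observation that the fixed points and their multipliers depend only on the top two Clebsch--Gordan components $\Omega^0 f$ and $\Omega^1 f$ through universal linear combinations with coefficients $\frac{d}{d+e}$, $\frac{e}{d+e}$, $\pm\frac{2}{d+e}$, and that $\rho_c$ rescales these two components by nonzero Schur constants. The only difference is presentational: the paper encodes the simultaneous M\"obius action on the multipliers as a $2\times 2$ change of basis of $D_1 = k\, dx \oplus k\, dy$ inside its homogeneous-resultant definition of $\lambda_{1,(d,e)}$, so that $A$ is the induced linear map on $D_{d+e}$, whereas you recover the same projective-linear $A$ from the elementary symmetric functions of the M\"obius-transformed roots.
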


Representation theoretically, we can obtain the following formula about resultant from the Woods Hole formula:
\begin{thm}(Corollary \ref{repWH}) For an arbitrary field $k$ and any polynomials $f,g \in k[x]$ such that $\deg f \geq 3$ and $\deg f \geq \deg g +2$, 
\[ \left. \frac{\partial }{\partial t} \fct{res}_x(f(x) , f'(x) + t g(x)) \right|_{t = 0} = 0. \]
\end{thm}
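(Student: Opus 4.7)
My plan is to prove the identity first in the generic case---when $f$ has $n := \deg f$ distinct roots in $\ol{k}$ and $\fct{char}\,k$ does not divide $n$---and then extend to arbitrary $f, g$ over arbitrary $k$ by a universality argument. The core calculation is a classical Jacobi-type residue identity, which is essentially the elementary incarnation of the Woods Hole relation used elsewhere in the paper.

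In the generic case, write $f(x) = a \prod_{i = 1}^n (x - \alpha_i)$ with distinct $\alpha_i \in \ol{k}$. Since $\deg g \leq n - 2$ while $\deg f' = n - 1$, the polynomial $f' + tg$ has degree exactly $n - 1$ in $x$ for all $t$, so Poisson's root-product formula yields
\[ \fct{res}_x(f, f' + tg) = a^{n-1} \prod_{i = 1}^n \bigl( f'(\alpha_i) + t\,g(\alpha_i) \bigr). \]
Differentiating at $t = 0$ gives
\[ \left. \frac{\partial}{\partial t} \fct{res}_x(f, f' + tg) \right|_{t = 0} = a^{n-1} \Bigl( \prod_{j} f'(\alpha_j) \Bigr) \sum_{i = 1}^n \frac{g(\alpha_i)}{f'(\alpha_i)}, \]
reducing the problem to showing $\sum_i g(\alpha_i)/f'(\alpha_i) = 0$. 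From the partial fraction expansion $g(x)/f(x) = \sum_i (g(\alpha_i)/f'(\alpha_i))/(x - \alpha_i)$ (valid because $\deg g < \deg f$), multiplying by $x$ and letting $x \to \infty$ shows the right-hand side tends to $\sum_i g(\alpha_i)/f'(\alpha_i)$, while the left-hand side vanishes since $\deg g \leq \deg f - 2$. Equivalently, this says that the residue of $g(x)/f(x)\,dx$ at infinity is zero, so by the residue theorem on $\pone$ the sum of its finite residues vanishes.

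To transfer the identity to an arbitrary field $k$ and to $f, g$ without the genericity assumptions, interpret $\fct{res}_x(f, f' + tg)$ through its Sylvester determinant of formal size $(2n-1) \times (2n-1)$, treating $f' + tg$ as a polynomial of formal degree $n-1$ in $x$. Then $\partial_t \fct{res}_x(f, f' + tg)|_{t = 0}$ is a universal polynomial in $\zahl[a_0, \dots, a_n, b_0, \dots, b_m]$ in the coefficients of $f$ and $g$, and the generic argument above shows it vanishes on the Zariski-dense subset of $\cpx$-specializations with $f$ separable, hence identically. The only real point of care is this universalization step---namely that the Sylvester definition agrees with the Poisson product formula in the generic setting---but no substantive obstacle arises; once it is verified, the identity specializes to every $k$ including those of positive characteristic. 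In the representation-theoretic spirit of the paper, the same result would also follow by applying Proposition~\ref{unif1} to the Woods Hole relation~\eqref{index}, but the direct residue computation above is more economical and characteristic-independent.
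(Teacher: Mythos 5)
Your proof is correct, but it takes a genuinely different route from the paper. The paper deduces the corollary from its representation-theoretic machinery: by the Clebsch--Gordan decomposition (Proposition \ref{CGdec}) one chooses a divisorial quasi-correspondence $\widetilde{f}$ of degree $(\deg f-1,1)$ with $\Omega^0\widetilde{f}=f$ and $\Omega^1\widetilde{f}=g$, and then the vanishing of the coefficient of $t$ is exactly the statement that the image of the multiplier map lies in the hyperplane $[(dz^0)^{d+e-1}(dz^1)]=0$ of Theorem \ref{hypcoord}, which in turn rests on the classical index relation \eqref{index}; the extension to arbitrary characteristic is by the same $\zahl$-universality of the Sylvester determinant that you invoke. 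You instead prove the identity directly: Poisson's product formula reduces it to the Euler--Jacobi vanishing $\sum_i g(\alpha_i)/f'(\alpha_i)=0$, which you obtain from the partial-fraction expansion (equivalently, the vanishing of the residue of $g/f\,dx$ at infinity), and then universalize. Your argument is self-contained and elementary --- it does not pass through $\dyn_{d,e}$, the multiplier map, or Theorem \ref{hypcoord} at all, and in effect it reproves the underlying Woods Hole/index relation for polynomials rather than quoting it. What it loses is precisely the point the paper is making: that this resultant identity is the shadow, under the Clebsch--Gordan coordinates $(\Omega^0 f,\Omega^1 f)$, of the index theorem for correspondences, so that the corollary illustrates the reduction $\dyn_{d,e}\ratmap\dyn_{1,d+e-1}$ rather than being an independent fact. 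The one step you flag as needing care --- that the Sylvester determinant of formal size $(n,n-1)$ agrees with the Poisson product on the separable locus and that the degree-$1$ coefficient in $t$ is a $\zahl$-polynomial in the coefficients of $f$ and $g$ --- is indeed the only delicate point, and it goes through exactly as you say (the paper performs the identical specialization argument in its own last two sentences).
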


This paper is organized as follows: In Section 2, we set up notation and terminology. In Section 3, we review representation theory of linear groups including Clebsch-Gordan decomposition that we use later. In Section 4, we construct moduli spaces of correspondences and rewrite the composition and conjugation of correspondences as maps and actions on the moduli spaces. In Section 5, we construct the moduli spaces of dynamical systems of correspondences, characterize the stable and semistable loci of the conjugation action and show the rationality of the moduli spaces. In Section 6, we construct multiplier maps and reformulate the multiplier index theorem representation-theoretically.

\subsection*{Acknowledgments}
I would like to thank Seidai Yasuda for proofreading this paper, originally the former half of my master thesis and for having supervised me in the master course. I am grateful to Takehiko Yasuda for proofreading this paper, having useful discussion and having accepted me as a master course and doctor course student.

\subsection*{Anacknowledgment}
The author and this work are supported by JSPS KAKENHI Grant-in-Aid for Research Fellow JP202122197.

\section{Notation and Terminology}

Throughout this paper, we follow \cite{Liu} for the terminology of algebraic geometry.

We fix a field $k$ of characteristic zero. Unless otherwise stated, we suppose that every scheme is a scheme over $k$.  

For a ring $R$ and a free $R$-module $M$ of finite rank, we denote by $R[M]$ the symmetric algebra $\fct{Sym}^\bullet_R M  := \bigoplus_{n=0}^{\infty} M^{\otimes n}/\langle v \otimes w - w \otimes v \rangle$. We note that if we take an $R$-basis $\{ x_1,\ldots , x_r \}$ of $M$, $R[M]$ is canonically isomorphic to the polynomial ring $R[x_1,\ldots , x_r]$.
If a group $G$ and a representation $\rho : G \to \fct{Aut}_R (M)$ are also given, we write $R[M]^G$ for the invariant ring.

We denote by $\fct{Sym}_n M$ the permutation-invariant part of the $n$-th tensor power. 

\section{Representation Theory for $\slt$ and $\pglt$}

In this section, we provide representation-theoretic terminology. We only use classically well-known operations, which are Cayley operators and Clebsch-Gordan decomposition. Contents in this section are found in, for example, \cite{Ca}.
 
For the special linear group $\slt(k)$, we denote the trivial representation on $k$ by $V_0$, the natural representation on $k^2$ by $V_1$ and the symmetric tensor representation $\fct{Sym}_n(V_1)$ by $V_n$. From the construction, we have $\dim V_n = n+1$.

\begin{prop}\label{even} If $n$ is an even number, then there exists an action of $\pglt (k)$ on the vector space $V_n$ such that the pullback action to $\slt (k) $ is $V_n$.
\end{prop}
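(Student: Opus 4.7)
The plan is to lift the $\slt(k)$-action on $V_n = \fct{Sym}_n(V_1)$ to a $\glt(k)$-action that kills the center, hence factors through $\pglt(k)$. The naive $\glt$-action via the symmetric power $\fct{Sym}^n : \glt(k) \to \fct{Aut}(V_n)$ does not descend, because a scalar matrix $\lambda \cdot I \in \glt(k)$ acts on $V_n$ by $\lambda^n$, which is nontrivial in general. The remedy is to twist by a suitable power of the determinant character.

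Concretely, I would define $\tilde\rho : \glt(k) \to \fct{Aut}(V_n)$ by
\[ \tilde\rho(g) := (\det g)^{-n/2} \cdot \fct{Sym}^n(g). \]
This makes sense precisely because $n$ is even, so that $n/2 \in \zahl$ and $(\det g)^{-n/2}$ is a genuine character of $\glt(k)$; the product of two homomorphisms (one valued in scalars) is again a homomorphism. A scalar $\lambda \cdot I$ then satisfies $\det(\lambda I) = \lambda^2$, so $\tilde\rho(\lambda I) = \lambda^{-n} \cdot \lambda^n \cdot \id_{V_n} = \id_{V_n}$. Hence $\tilde\rho$ is trivial on the center $\gmult \subset \glt(k)$ and factors uniquely through $\pglt(k) = \glt(k)/\gmult$, giving the desired action $\rho : \pglt(k) \to \fct{Aut}(V_n)$.

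Finally, I would verify the compatibility with the original $V_n$ under pullback to $\slt(k)$. For $g \in \slt(k)$ we have $\det g = 1$, so $(\det g)^{-n/2} = 1$, and therefore $\tilde\rho(g) = \fct{Sym}^n(g)$. Since the restriction of $\tilde\rho$ to $\slt(k)$ agrees with the symmetric power action that defines $V_n$, and the composition $\slt(k) \to \glt(k) \to \pglt(k)$ agrees with the canonical quotient map up to the kernel $\{\pm I\}$, the pullback of $\rho$ along $\slt(k) \to \pglt(k)$ is exactly the representation $V_n$.

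The only real content is the observation that for even $n$ the character $(\det)^{-n/2}$ exists on $\glt$ and exactly cancels the scalar action on $\fct{Sym}^n$; this is what fails for odd $n$, where no algebraic square root of the determinant is available on $\glt$. No further obstacle arises, and the proof amounts to checking these two routine identities.
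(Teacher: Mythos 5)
Your proposal is correct and follows essentially the same route as the paper: twist the symmetric power action by the character $(\det g)^{-n/2}$, which exists precisely because $n$ is even, observe that scalars then act trivially so the action factors through $\pglt(k)$, and note that the twist is invisible on $\slt(k)$. The only cosmetic difference is that the paper defines the twist on the full tensor power $(k^2)^{\otimes n}$ and then passes to the $\sym_n$-invariant subspace, whereas you work directly on $\fct{Sym}^n(V_1)$.
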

\begin{proof}We consider the representation $\Delta_n(-\frac{n}{2})$ of $\fct{GL}_2(k)$ on $(k^2)^{\otimes n}$ which is given by
\[ g \cdot (v_1 \otimes v_2 \otimes \cdots \otimes v_n) := (\det g )^{-\frac{n}{2}}(gv_1 \otimes gv_2 \otimes \cdots \otimes gv_n)\ (\text{for }g \in \glt (k) ).\]
By $\Delta_n(-\frac{n}{2})$, any scalar matrix $\binom{c\ \ 0}{0\ \ c} \in \fct{GL}_2(k)$ acts trivially, therefore $\Delta_n(-\frac{n}{2})$ is a representation of $\pglt (k)$. Moreover, the action of $\pglt(k)$ by $\Delta_n(-\frac{n}{2})$ commutes with the action of $\sym_n$ by permuting components, therefore we can obtain a subrepresentation of $\pglt (k)$ on $\fct{Sym}_n k^2$. The restriction of the subrepresentation on $\slt (k)$ is the $n$-th symmetric tensor of $V_1$, therefore this is a required representation.
\end{proof}
%\begin{rem} Action of $\pglt (k)$ on the vector space $V_n$ such that the pullback action to $\slt (k) $ becomes $V_n$ may not be unique. To see that, let $V$ be a finite dimensional $k$-vector space and $\rho_1, \rho_2  : \pglt (k) \to \fct{GL}(V)$ be group homomorphisms such that $\rho_1|_{\fct{PSL}_2 (k) } = \rho_2|_{\fct{PSL}_2 (k) }$. Then the map $\rho_3 : \pglt(k) \to \fct{GL}(V)$ defined by $\rho_3(g) = \rho_1(g) \rho_2(g^{-1} )$ is a group homomorphism and trivial on $\fct{PSL}_2(k)$, therefore induces $\fct{PGL}_2(k) / \fct{PSL}_2(k) = k^\times / k^{\times 2} \to \fct{GL}(V)$.  
%\end{rem}
\begin{prop}\label{dualrep} The dual representation of $V_n$ is isomorphic to $V_n$.
\end{prop}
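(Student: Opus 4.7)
The plan is to produce a concrete $\slt$-equivariant isomorphism $V_n \xrightarrow{\sim} V_n^*$ by starting from the natural symplectic form on $V_1$ and lifting it to symmetric powers. Everything happens in characteristic zero, so symmetric powers commute with duals in a canonical way, which is what makes the argument go through without fuss.

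First I would introduce the $\slt$-invariant alternating bilinear form $\omega : V_1 \otimes V_1 \to k$ defined on $V_1 = k^2$ by $\omega(v,w) = \det(v \mid w)$, the determinant of the $2\times 2$ matrix whose columns are $v$ and $w$. Invariance under $\slt(k)$ is immediate from $\det g = 1$. Because $\omega$ is non-degenerate, the map $\phi : V_1 \to V_1^*$, $\phi(v) = \omega(v, -)$, is an isomorphism of vector spaces, and it is $\slt$-equivariant by the invariance of $\omega$. Thus $V_1 \cong V_1^*$ as representations of $\slt(k)$.

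Next I would take $n$-th symmetric powers. The functor $\fct{Sym}_n$ is covariant, so $\phi$ induces an $\slt$-equivariant isomorphism $\fct{Sym}_n \phi : \fct{Sym}_n V_1 \to \fct{Sym}_n V_1^*$. In characteristic zero there is a canonical $\fct{GL}(V_1)$-equivariant isomorphism $\fct{Sym}_n V_1^* \cong (\fct{Sym}_n V_1)^*$; this identification sends the symmetrized tensor $\ell_1 \cdots \ell_n \in \fct{Sym}_n V_1^*$ to the functional $v_1 \cdots v_n \mapsto \frac{1}{n!}\sum_{\sigma \in \sym_n} \prod_{i} \ell_i(v_{\sigma(i)})$ on $\fct{Sym}_n V_1$. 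Restricting to $\slt(k) \subset \fct{GL}(V_1)$ and composing with $\fct{Sym}_n\phi$ produces an $\slt(k)$-equivariant isomorphism
\[
V_n = \fct{Sym}_n V_1 \xrightarrow{\fct{Sym}_n \phi} \fct{Sym}_n V_1^* \xrightarrow{\sim} (\fct{Sym}_n V_1)^* = V_n^*,
\]
which is the required isomorphism of representations.

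There is essentially no serious obstacle: the only point that needs care is the equivariance of the canonical identification $\fct{Sym}_n V_1^* \cong (\fct{Sym}_n V_1)^*$, which is a standard fact in characteristic zero (the symmetrization factor $\frac{1}{n!}$ forces the reliance on characteristic not dividing $n!$, which the assumption $\fct{char}(k)=0$ from the Notation section supplies). Everything else is functoriality of $\fct{Sym}_n$ and the $\slt$-invariance of the determinant form.
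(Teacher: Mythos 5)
Your proof is correct and is essentially the paper's own argument in coordinate-free language: the symplectic form $\omega(v,w)=\det(v\mid w)$ is exactly the content of the paper's matrix identity $(A^{-1})^T = IAI^{-1}$ with $I=\binom{0\ -1}{1\ \ 0}$, and both proofs then pass from $n=1$ to general $n$ via the characteristic-zero identification $\fct{Sym}_n(V_1^*)\cong(\fct{Sym}_n V_1)^*$ (the paper's Remark~\ref{caniso}).
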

\begin{proof} Since $k$ is of characteristic $0$, it is enough to show the proposition for $n=1$ (see Remark \ref{caniso}).

Let $[e_1 , e_2]$ be the basis of $V_1$ and $[f_1 , f_2]$ the dual basis of $V_1^*$. For $A \in \slt (k)$, 
By the definition of dual representation, $A \cdot f_1 = f_1 \circ A^{-1}$. Therefore the transpose $(A^{-1})^T$ is the representation matrix of the action of $A$ by the dual representation. We have $(A^{-1})^T = I A I^{-1}$ for $I = \binom{0\ -1}{ 1\ \ 0}$, therefore the linear morphism such that $e_1 \mapsto -f_2,\ e_2 \mapsto f_1$ is an isomorphism.
\end{proof}
\begin{rem}\label{caniso} The canonical morphism $\fct{Sym}_{n} (V^*) \to (\fct{Sym}_{n} V )^*$ induced from the inclusion morphism \[ \fct{Sym}_n V = (V^{\otimes n})^{\sym_n } \to V^{\otimes n}\] is an isomorphism because our field $k$ has characteristic 0 and the binomial coefficients are invertible. In positive characteristic $p$, the canonical morphism $\fct{Sym}_{n} (V^*) \to (\fct{Sym}_{n} V )^*$ is not isomorphism for $n \geq p$. In fact, the representations $V_n = \fct{Sym}_n (V_1) $ and $V_n^* = \fct{Sym}_n (V_1)^*$ are not isomorphic (\cite{Ca}).
\end{rem}

\subsection{Weight}
For a group $G$, we denote the abelian category of representations of $G$ over $k$ by $\fct{Rep}_k (G)$. For a representation $V$ of a group $G$, we denote by $\fct{Rep}_k (V)$ the minimal full subcategory of $\fct{Rep}_k(G)$ includes $V$ as an object and whose objects are closed by taking finite tensor products, finite direct sums, dual representation and subrepresentations.
The $K_0$-group of a small abelian category $\cat{C}$ is defined by \[ K_0(\cat{C}) := \bigoplus_{V \in \fct{objclass}(\cat{C})} \zahl [V] {\bigg/} \binom{ [V] - [V'] - [V''] }{\text{for exact } 0 \to V' \to V \to V'' \to 0} \]
where $\fct{objclass}(\cat{C})$ is the set of isomorphism classes of objects of $\cat{C}$.
If the abelian category $\cat{C}$ is monoidal and the tensor functor is exact in both arguments, $K_0(\cat{C})$ becomes a ring by $[V] \cdot [W] := [V \otimes W]$. The induced product is commutative if the monoidal product is symmetric, therefore $K_0(\fct{Rep}_k(V))$ for any group $G$ and any representation $V$ is a commutative ring.
\begin{prop}\label{lau1} Let $k$ be an infinite field. For the representation $k(1) := (\fct{id} : k^\times \to \fct{GL}_1(k) , k)$ of $k^\times$, there exists an isomorphism $w : K_0( \fct{Rep}_k(k(1)) ) \simeq \zahl [ q, q^{-1} ]$ induced by $[k(1) ] \mapsto q$.
\end{prop}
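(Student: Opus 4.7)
The plan is to identify the objects of $\fct{Rep}_k(k(1))$ explicitly as finite direct sums of one-dimensional characters and then read off the $K_0$-ring structure. For each $n \in \zahl$, write $k(n)$ for the one-dimensional representation on which $t \in k^\times$ acts by $t^n$; in particular $k(1)$ is the given representation and $k(0)$ is trivial. Each $k(n)$ lies in $\fct{Rep}_k(k(1))$, being a tensor power of $k(1)$ (for $n \geq 0$) or of its dual (for $n < 0$), and the $k(n)$ are pairwise non-isomorphic because $t^n = t^m$ can hold for all $t \in k^\times$ only when $n = m$ (the polynomial $x^{n-m} - 1$ has finitely many roots while $k^\times$ is infinite).

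Next I would show that every object $V$ of $\fct{Rep}_k(k(1))$ is a finite direct sum of such $k(n)$'s. Finite direct sum, tensor product, and dual all preserve this class via $k(n) \otimes k(m) \cong k(n+m)$ and $k(n)^* \cong k(-n)$, so the real content lies in closure under subrepresentations. Given $V = \bigoplus_n V_n$ decomposed into its weight spaces and a subrepresentation $W \subseteq V$, I claim $W = \bigoplus_n (W \cap V_n)$. For $w = \sum_{n \in N} w_n \in W$ with $N \subset \zahl$ finite and $w_n \in V_n$, the vectors $t \cdot w = \sum_{n \in N} t^n w_n$ all lie in $W$; choosing $|N|$ distinct elements $t_1, \ldots, t_{|N|} \in k^\times$---possible precisely because $k$ is infinite---the Vandermonde matrix $(t_i^n)_{i,\, n \in N}$ is invertible, so each $w_n$ is a $k$-linear combination of the $t_i \cdot w$'s and therefore lies in $W$.

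This weight splitting makes $\fct{Rep}_k(k(1))$ semisimple with simple objects exactly $\{k(n)\}_{n \in \zahl}$, so $K_0(\fct{Rep}_k(k(1)))$ is the free abelian group on these classes. The tensor product descends to the multiplication $[k(n)] \cdot [k(m)] = [k(n+m)]$, and the assignment $w([k(n)]) := q^n$ extends to the desired ring isomorphism onto $\zahl[q, q^{-1}]$ with $w([k(1)]) = q$.

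I expect the only step beyond formal bookkeeping to be the Vandermonde-based splitting of subrepresentations, since that is the unique place where the hypothesis that $k$ is infinite enters in an essential way; the rest of the argument is a matter of organizing the combinatorics of weights.
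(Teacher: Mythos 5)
Your overall strategy coincides with the paper's: realize every object of $\fct{Rep}_k(k(1))$ as a finite direct sum of the characters $k(n)$, observe that the only substantive point is closure under subrepresentations, and prove that by a Vandermonde argument exploiting the infinitude of $k$. However, there is a genuine error in the one step you yourself identify as the essential one. You choose $|N|$ \emph{arbitrary} distinct elements $t_1,\dots,t_{|N|}\in k^\times$ and assert that the matrix $(t_i^{\,n})_{i,\,n\in N}$ is invertible. Since the exponent set $N$ need not be $\{0,1,\dots,|N|-1\}$, this is a \emph{generalized} Vandermonde matrix, and distinctness of the $t_i$ does not guarantee invertibility: for $N=\{0,2\}$, $t_1=1$ and $t_2=-1$ one gets $\left(\begin{smallmatrix}1&1\\1&1\end{smallmatrix}\right)$, which is singular even though $t_1\neq t_2$. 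So as written the claim that each $w_n$ is a linear combination of the $t_i\cdot w$ does not follow.

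The repair is exactly the device used in the paper's proof: pick a \emph{single} $a\in k^\times$ such that the map $N\to k^\times$, $n\mapsto a^n$, is injective (possible because $k$ is infinite, since each $x^{n-m}-1$ has only finitely many roots), and use the vectors $a^{i-1}\cdot w=\sum_{n\in N}(a^{n})^{i-1}w_n$ for $i=1,\dots,|N|$. The transition matrix $\bigl((a^{n})^{i-1}\bigr)_{i,\,n\in N}$ is then an honest Vandermonde matrix in the pairwise distinct scalars $a^n$, hence invertible, and each weight component $w_n$ lies in $W$. With that substitution your argument goes through and is essentially identical to the paper's; the remaining points in your write-up (pairwise non-isomorphy of the $k(n)$, semisimplicity of the category, and the ring structure $[k(n)]\cdot[k(m)]=[k(n+m)]$ giving $\zahl[q,q^{-1}]$) are correct.
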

\begin{proof} We write $k(n)$ for $k(1)^{\otimes n}$ and $k(-n)$ for $k(n)^*$ for nonnegative integer $n$. We define a ring morphism $w^{-1} : \zahl [q,q^{-1}] \to K_0(\fct{Rep}_k(k(1)))$ by $q \mapsto [k(1)]$. The morphism $w^{-1}$ is well-defined by sending $q^{-1}$ to $[k(-1)]$, because $k(0) := k(1) \otimes k(-1)$ is the trivial representation, which is the multiplicative unit of the ring $K_0(\fct{Rep}_k(k(1)))$.

We will show $w^{-1}$ is bijective.

(injectivity) For distinct integers $m\neq n$, there exists $a \in k^\times $ such that $a^n \neq a^m$ since $k$ is infinite. Therefore $k(n)$ and $k(m)$ are not isomorphic to each other. This leads to that $w^{-1}$ is injective.

(surjectivity) We show that every subrepresentation $W$ of a representation $V := \bigoplus_{i=1}^{m} k(n_i)^{\oplus d_i }$ for distinct integers $n_i$ and positive integer $m$ is isomorphic to $\bigoplus_{j=1}^{m'} k(n'_j)^{\oplus d'_j}$ for some $n'_i$s, $d'_i$'s and $m'$. Let $p_i : V \to k(n_i)^{\oplus d_i}$ be the projections. It is enough to show that for any vector $v \in W$, $p_i(v) \in W$. 

Let $v \in W$ be a vector. We denote the action of $a \in k^\times$ by $a \cdot v$.
Since $k$ is infinite, we can take $a \in k^\times$ as the map $\{ n_1 , \ldots , n_m \} \to k^\times : n_i \mapsto a^{n_i}$ is injective (where $\{ n_1 , \ldots , n_m \}$ is considered as a set). Then a linear map
\[ k^{\oplus m} \to \sum_{i = 1}^{m} k p_i(v) : (b_1, b_2, \cdots ,b_n) \mapsto \sum_{i=1}^{m} b_i(a^{i-1} \cdot v) \]
is surjective because the representation matrix via the basis $(b_1, b_2 ,\ldots , b_n)$ and $(p_{i_1}(v) , p_{i_2}(v) , \ldots , p_{i_{m'}}(v))$ is full rank because it is a submatrix of the Vandermonde matrix of $\{ a^{n_1} , \ldots , a^{n_m} \}$, here $(p_{i_1}(v) , p_{i_2}(v) , \ldots , p_{i_{m'}}(v))$ is made by picking nonzero components out of $(p_1(v) , \ldots , p_n(v))$. Therefore we have $p_i(v) = \sum_{i=1}^{m} b_i(a^{i-1} \cdot v)$ for some $b_i \in k$, that is $p_i(v) \in W$ because $W$ is a subrepresentation.
\end{proof}
\begin{deff} For a group $G$, a group morphism $\lambda : k^\times \to G$ and a finite dimensional $G$-representation $V$ such that the $k^\times$-representation of $V$ induced from $V$ and $\lambda$ is in $\fct{Rep}_k(k(1))$, we call the composition morphism \[K_0 (\fct{Rep}_k(V)) \to K_0(\fct{Rep}_k(k(1))) \simeq \zahl[q,q^{-1}]\] the {\em weight morphism associated to $\lambda$} and denote by $w_{\lambda}$.
\end{deff}
In the notation of the above definition, the Laurent polynomial $w_\lambda ([W] )$ for any representation $W \in \fct{Rep}_k (V)$ has positive coefficients.

We fix the group morphism
\[ c : k^\times \ni t \mapsto \binom{t\ \ \ 0\ }{0\ \ t^{-1}} \in \slt (k). \]
It immediately follows that the $k^\times$-representation induced from $V_1$ and $c$ is in $\fct{Rep}_k(k(1))$ and $w_c(V_1) = q + q^{-1}$.

\subsection{Clebsch-Gordan Decomposition}
For a finite dimensional vector space $V$, the space of $n$-ary forms, that is, the vector space of the all degree $n$ homogeneous polynomials in $k[V]$ is naturally isomorphic to $(\fct{Sym}_n(V^*))^*$ in arbitrary characteristic. Therefore the representation $V_n$ is identified with the space of $n$-ary binomial forms in characteristic zero. The variables are the standard basis of $V_1 = k^2$ indeed.
\begin{prop}\label{irr} Let $k$ be a field of characteristic zero. Then the representation $V_n$ is irreducible.
\end{prop}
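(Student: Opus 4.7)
The plan is to exploit the explicit weight decomposition of $V_n$ under the torus morphism $c : k^\times \to \slt(k)$ introduced in the previous subsection, together with the action of the two unipotent subgroups. Identify $V_n = \fct{Sym}_n V_1$ with the space of binary forms of degree $n$ in variables $x,y$ dual to the standard basis of $V_1$. Under $c(t) = \binom{t\ \ 0}{0\ t^{-1}}$, the monomial $x^{n-i} y^i$ has weight $q^{n-2i}$, and these $n+1$ weights are pairwise distinct. Let $W \subseteq V_n$ be any nonzero $\slt(k)$-subrepresentation; the goal is $W = V_n$.

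First I would observe that the Vandermonde argument used in the surjectivity part of Proposition \ref{lau1} applies verbatim to the $c$-action on $W$: since $k$ has characteristic zero and is therefore infinite, and since the weights of $x^{n-i}y^i$ are pairwise distinct, each weight component of any vector of $W$ again lies in $W$. Hence $W$ is spanned by the set of monomials $x^{n-i}y^i$ that it contains, and so in particular $W$ contains some pure monomial $x^{n-i_0} y^{i_0}$. Next I would let $W$ absorb the remaining monomials via the unipotent action. Applying $u^+(s) = \binom{1\ \ s}{0\ \ 1}$ to the monomial $x^{n-i_0}y^{i_0}$ yields
\[ x^{n-i_0}(s x + y)^{i_0} = \sum_{j=0}^{i_0} \binom{i_0}{j} s^{i_0 - j} x^{n-j}y^j, \]
and letting $s$ range over enough values of the infinite field $k$, a second Vandermonde extraction — combined with the invertibility of the binomial coefficients $\binom{i_0}{j}$, which is where characteristic zero enters essentially — isolates each monomial $x^{n-j}y^j$ with $j \leq i_0$ as a vector of $W$. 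Running the symmetric computation with the lower unipotent $u^-(s) = \binom{1\ \ 0}{s\ \ 1}$ produces the monomials with $j \geq i_0$, so $W$ contains the full monomial basis and hence equals $V_n$.

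The only subtle point is the characteristic-zero hypothesis, which is used twice: to guarantee that $k$ is infinite (needed both for the weight-space decomposition and for the second Vandermonde) and to ensure that binomial coefficients are units (needed to read off the individual $x^{n-j}y^j$ from the polynomial in $s$). This double use is consistent with, and explains, the failure of irreducibility of $\fct{Sym}_n V_1$ in characteristic $p$ once $n \geq p$ alluded to in Remark \ref{caniso}; I do not expect any other genuine obstacle in the argument.
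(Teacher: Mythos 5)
Your proof is correct and follows essentially the same strategy as the paper's: both first invoke Proposition \ref{lau1} to see that any nonzero $\slt$-stable subspace $W$ is the direct sum of the monomial weight spaces it contains, and then act by a non-diagonal group element to spread one monomial to all the others. The only (cosmetic) difference is that the paper applies the single matrix $\binom{1\ 1}{1\ 2}$, whose action on a monomial yields a form with all coefficients nonzero in characteristic zero, whereas you use the two unipotent one-parameter subgroups; either variant closes the argument, and your second Vandermonde step is even redundant since a single nonzero value of $s$ already suffices once the weight decomposition of $W$ is known.
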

\begin{proof} We note that $c(t) \cdot x_0^{n-i}x_1^i = t^{n-2i}x_0^{n-i}x_1^i$ under the identification between $V_n$ and the space of $n$-ary binomial forms, where the pair $(x_0,x_1)$ is the standard basis of $k^2$.

Let $W$ be an arbitrary nonzero $\slt$-stable subspace of $V_n$. By Proposition \ref{lau1}, we have
\[ W = \bigoplus_{i \in I_W} k x_0^{n-i}x_1^i \]
for some nonempty $I_W \subset \{ 0 , 1, \ldots , n \}$. We take a monomial $x_0^{n-i} x_1^i \in W$. Since $W$ is $\slt$-stable, we have $\binom{1\ 1}{1\ 2} \cdot x_0^{n-i} x_1^i = (x_0+x_1)^{n-i}(x_0 + 2x_1)^i \in W$. Therefore we have $I_W \supset \{ 0, \ldots, n \} $, that is, $V=W$.
\end{proof}

Under the identification, if we have two binomial forms $f(x_0 , x_1)$ and $g(y_0,y_1)$ of degree $d$ and $e$ respectively, the Cayley operator $\Omega_{xy} :=  \partial_{x_0}\partial_{y_1} - \partial_{y_0} \partial_{x_1}$ gives a new binomial form 
\[ \left. \left( \Omega_{xy}^m f(x_0 , x_1) \cdot g(y_0,y_1) \right) \right|_{(x_0 , x_1) = (y_0 ,y_1) = (z_0,z_1)} \]
of variables $(z_0,z_1)$ and degree $d+e-2m$, for $0 \leq m \leq \min (d,e) = \frac{1}{2}(d+e-|d-e|) $. This linear map is $\slt$-equivariant, that is, we have a morphism of representation
\[ \Omega^m : V_d \otimes V_e \to V_{d+e-2m} \]
given by
\begin{equation} \Omega^m( f(x_0,x_1) \otimes g(y_0,y_1) ) :=  \left. \left( \Omega_{xy}^m f(x_0 , x_1) \cdot g(y_0,y_1) \right) \right|_{(x_0 , x_1) = (y_0 ,y_1) = (z_0,z_1)}. \label{omega} \end{equation}

\begin{prop}\label{CGdec}(Clebsch-Gordan Decomposition, \cite[Theorem 3.2.4]{Ca}) If $\fct{char} k = 0$, the morphism 
\[ \bigoplus_{i=0}^{|d-e|} \Omega^i : V_d \otimes V_e \to V_{d+e} \oplus V_{d+e-2} \oplus \cdots \oplus V_{|d-e|+2} \oplus V_{|d-e|} \]
is an isomorphism.
\end{prop}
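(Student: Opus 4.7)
The plan is to compare weights under the one-parameter subgroup $c$ of Proposition~\ref{lau1} in order to pin down the abstract isomorphism type of $V_d \otimes V_e$, and then invoke Schur's lemma together with the non-vanishing of each individual $\Omega^k$ to conclude. (I will assume the intended upper limit of the direct sum is $\min(d,e)$, which is forced by the displayed target.)

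First I would compute $w_c(V_n) = q^n + q^{n-2} + \cdots + q^{-n}$, using that $c(t)$ scales the monomial $x_0^{n-j}x_1^j$ by $t^{n-2j}$, and then apply multiplicativity of $w_c$ on tensor products to expand
\[ w_c(V_d \otimes V_e) = \left( \sum_{i=0}^d q^{d-2i} \right)\left( \sum_{j=0}^e q^{e-2j} \right). \]
Regrouping by total exponent, a short calculation shows that this product equals $\sum_{k=0}^{\min(d,e)} w_c(V_{d+e-2k})$, which already gives equality of dimensions on the two sides of the claimed isomorphism.

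Next, complete reducibility of finite-dimensional $\slt$-representations in characteristic zero (Weyl's theorem), combined with Proposition~\ref{irr} and the preceding weight identity, yields an abstract isomorphism $V_d \otimes V_e \simeq \bigoplus_{k=0}^{\min(d,e)} V_{d+e-2k}$ in which each summand occurs with multiplicity one. By Schur's lemma, $\fct{Hom}_{\slt}(V_d \otimes V_e,\, V_{d+e-2k})$ is one-dimensional, spanned by the canonical projection onto the $k$-th isotypic component. A direct computation from (\ref{omega}) gives
\[ \Omega^k(x_0^d \otimes y_1^e) = \frac{d!\,e!}{(d-k)!\,(e-k)!}\,z_0^{d-k}z_1^{e-k}, \]
which is nonzero for every $0 \leq k \leq \min(d,e)$. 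Hence each $\Omega^k$ is a nonzero scalar multiple of the canonical projection, and the assembled map $\bigoplus_k \Omega^k$ coincides with the abstract isomorphism up to an invertible diagonal rescaling on the target, so it is itself an isomorphism.

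The main obstacle I anticipate is the appeal to complete reducibility, which the paper has not explicitly developed. A self-contained workaround is to exhibit explicit highest weight vectors (annihilated by the raising operator $\binom{0\ 1}{0\ 0}$) of weight $d+e-2k$ inside $V_d \otimes V_e$ for each $0 \leq k \leq \min(d,e)$; these are the standard Clebsch--Gordan vectors with alternating binomial coefficients. Each such vector generates an irreducible subrepresentation of the predicted shape by Proposition~\ref{irr}, and the same character identity forces the dimensions of these subrepresentations to sum to $(d+1)(e+1)$, giving the decomposition without citing Weyl.
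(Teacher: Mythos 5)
Your proof is correct and follows essentially the same route as the paper's: both reduce the claim to the non-vanishing of each individual $\Omega^k$, verified by an explicit evaluation on a monomial pure tensor, with the irreducibility of the targets (Proposition \ref{irr}) converting non-vanishing into surjectivity. The paper's version is much terser---it omits the character computation, the multiplicity-one bookkeeping via Schur's lemma, and the dimension count that you supply---and you are right that the summation bound in the statement should read $\min(d,e)$ rather than $|d-e|$.
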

\begin{proof}
It is enough to show that the morphism $ \Omega^m : V_d \otimes V_e \to V_{d+e-2m} $ is surjective for $0 \leq m \leq \min (d,e)$. By Proposition \ref{irr}, it is sufficient to show that the morphism $ \Omega^m : V_d \otimes V_e \to V_{d+e-2m} $ is nonzero, which follows from the explicit calculation
\[ \Omega^m (x_0^d  \otimes y_0^{e-m}y_1^m ) = \frac{d!m!}{(d-m)! } z_0^{d+e-2m}. \]
\end{proof}

\section{Correspondence}
Let $X$ and $Y$ be schemes. A closed subscheme of $X \times Y$ is called an algebraic correspondence between $X$ and $Y$. We can regard a morphism $X \to Y$ as an algebraic correspondence given by the graph of the morphism. Therefore algebraic correspondence is a generalization of morphism.

\subsection{The Moduli Space of Correspondences over $\proj^1$}
Over $\pone \times \pone$, we denote the line bundle $p_1^* \oshf(d) \otimes p_2^* \oshf(e)$ by $\oshf(d,e)$ and the set of its global sections by $V_{d,e}$. We fix homogeneous coordinates of each components, choosing bases $x_0,x_1 \in V_{1,0}$ and $y_0,y_1 \in V_{0,1}$. Using these coordinates we have
\[ V_{d,e} = \left\{ \left. \sum_{0 \leq i \leq d,\ 0 \leq j \leq e} a_{i,j}x_0^{d-i}x_1^{i}y_0^{e-j}y_1^j \right| a_{i,j} \in k \right\}. \]
Later we will see that we can identify $V_{d,e}$ and $V_d \otimes V_e$ (Corollary \ref{conjact}).
\begin{deff} A nonzero element $f$ of $V_{d,e}$ is called {\em a divisorial quasi-correspondence} of degree $(d,e)$. A $(d+1) \times (e+1)$-matrix $A = (a_{i,j})_{0\leq i \leq d, \\ 0 \leq j \leq e}$ is called {\em the coefficient matrix} of $f$ if
\begin{equation} f = \sum_{0 \leq i \leq d,\ 0 \leq j \leq e} a_{i,j}x_0^{d-i}x_1^{i}y_0^{e-j}y_1^j. \label{divqc} \end{equation}
\end{deff}
We often abbreviate a divisorial quasi-correspondence (\ref{divqc}) as $f = \sum_{i , j} a_{ij} x^i y^j$ if the degree of correspondence $(d,e)$ is apparent.
\begin{deff} A closed subscheme $C$ of $\pone \times \pone$ is said to be {\em a divisorial correspondence} if $\oshf_C = \oshf_{\pone \times \pone }/\shf{I}$ and $\shf{I}$ is a locally free sheaf of rank one. A divisorial correspondence $C$ given by an ideal sheaf $\shf{I}$ is {\em of degree $(d,e)$} if $\shf{I}$ is isomorphic to $\oshf(-d,-e)$ as an $\oshf_{\pone \times \pone}$-module.
\end{deff}
\begin{rem}  We abbreviated the term ``effective'', the condition what we required for divisorial correspondence as a divisor on $\pone \times \pone$.
\end{rem}
\begin{rem} In a theory of quantum cohomology, quasimap is used to compactify moduli spaces of maps (\cite{Br}).  Divisorial quasi-correspondence is to divisorial correspondence what quasimap is to map. 
\end{rem}
\begin{prop} For the scheme $(X,\oshf_X )$ and an invertible sheaf $\lshf$ over $X$, let $\fct{P}(\lshf)$ be the set of the ideal sheaves on $X$ isomorphic to $\lshf$ as an $\oshf_X$-module. Then we have a bijection $\phi_{X,\lshf } : \fct{P}(\lshf) \simeq (\lshf^{\vee}(X)\setminus\{0\}) /\oshf(X)^\times$ which is natural in the following sense:

 For any morphism $f: Y \to X$ of schemes, the following diagram \[
 \begin{tikzcd}
P(\shf{L}) \arrow{r}{\phi_{X,\shf{L}}} \dar &  (\lshf^{\vee}(X)\setminus\{0\}) /\oshf(X)^\times \dar \\
 P(f^* \shf{L}) \arrow{r}{\phi_{Y, f^* \shf{L} }} & (f^*\lshf^{\vee}(Y)\setminus\{0\}) /\oshf(Y)^\times
 \end{tikzcd} \label{moduli}
 \]
is commutative.
\end{prop}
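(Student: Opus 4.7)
The plan is to interpret an ideal sheaf $\ishf \subset \oshf_X$ isomorphic to $\lshf$ as the same data as an injective $\oshf_X$-linear morphism $\lshf \to \oshf_X$, modulo the ambiguity in the choice of isomorphism $\lshf \simeq \ishf$. Since $\lshf$ is invertible, $\fct{Hom}_{\oshf_X}(\lshf,\lshf) \simeq \oshf(X)$ and therefore $\fct{Aut}_{\oshf_X}(\lshf) \simeq \oshf(X)^\times$, so modding out this ambiguity yields the set on the right-hand side.

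Concretely, I would define $\phi_{X,\lshf}(\ishf) := [\iota_\ishf \circ \alpha]$ for any $\oshf_X$-linear isomorphism $\alpha : \lshf \to \ishf$, where $\iota_\ishf : \ishf \hookrightarrow \oshf_X$ is the canonical inclusion; this is a nonzero element of $\lshf^\vee(X) = \fct{Hom}_{\oshf_X}(\lshf,\oshf_X)$, and two choices of $\alpha$ differ precisely by a global unit, so the class in the quotient is well-defined. For the inverse, given a nonzero section $s : \lshf \to \oshf_X$, I would send it to $\fct{Im}(s) \subset \oshf_X$; locally $\lshf$ is trivial, so $s$ is multiplication by a non-zero-divisor element of $\oshf_X$ (using the implicit hypothesis that $X$ is integral, which holds in our application to $\pone \times \pone$), hence $s$ is injective and $\fct{Im}(s) \simeq \lshf$ defines an element of $\fct{P}(\lshf)$. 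Rescaling $s$ by a unit leaves the image unchanged, so the assignment descends to the quotient, and the two constructions are tautologically mutually inverse.

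Naturality then follows from the functoriality of $f^*$ applied to the defining morphism: pulling back $\lshf \simeq \ishf \hookrightarrow \oshf_X$ through $f$ yields $f^*\lshf \to \oshf_Y$, which is exactly the representative of $\phi_{Y,f^*\lshf}$ of the pulled-back ideal. The only real subtlety is that for non-flat $f$ the map $f^*\ishf \to \oshf_Y$ can fail to be injective, in which case the left vertical arrow of the diagram is defined by passing to the image in $\oshf_Y$; with this convention the square commutes by inspection. Thus the main technical point throughout is to ensure injectivity of a nonzero morphism out of an invertible sheaf, which is guaranteed by the integrality hypotheses implicit in the application.
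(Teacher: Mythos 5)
Your proof is correct and follows essentially the same route as the paper's: both identify an ideal sheaf isomorphic to $\lshf$ with an injective morphism $\lshf \to \oshf_X$, i.e.\ a nonzero global section of $\lshf^{\vee}$ determined up to the automorphism group $\fct{Aut}_{\oshf_X}(\lshf)\simeq\oshf(X)^\times$, and both deduce naturality from the functoriality of pullback. If anything, you are more explicit than the paper about the one genuine subtlety (injectivity of a nonzero morphism out of an invertible sheaf, which needs integrality or a similar hypothesis on $X$), a point the paper leaves implicit.
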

\begin{proof} We first construct a map $\pi : \lshf^{\vee}(X)\setminus\{0\} \to \fct{P}(\lshf)$. For a global section $s \in\lshf^{\vee}(X)$, we denote by $s_f : \oshf \to \lshf^{\vee}$ an injective sheaf morphism defined by $1|_U \mapsto s|_U$ at every open $U \subset X$. By $\shf{I}_f$ we denote the ideal sheaf defined by $i_s \otimes_{\oshf} \lshf : \lshf \to \oshf$. This gives a map $\pi : \lshf^{\vee}(X)\setminus\{0\} \to \fct{P}(\lshf)$.

We show $\pi$ is surjective. For a given ideal sheaf $\ishf \to \oshf$, we fix an isomorphism $\lshf \simeq \ishf$. By tensoring $\lshf^{\vee}$ to these morphisms, we obtain an injective morphism $\oshf \simeq \lshf^{\vee} \otimes \ishf \to \lshf^{\vee}$. By taking the global sections, we obtain a $\oshf(X)$-linear map $i : \oshf(X) \to \lshf^{\vee}(X).$
We set $s := i(1)$ and then $\shf{I} = \shf{I}_s$ follows immediately. Therefore $\pi$ is surjective.

For $c \in \oshf(X)^\times = \fct{Aut}_{\oshf_X}(\oshf)$, we have $\ishf_s = \ishf_{cs}$ as subsheaf of $\oshf$. Therefore the map $\phi_{X,\shf{L} }$ is well-defined and injective. On the other hand, if $\ishf_s = \ishf_s$ for $s,s' \in \lshf^{\vee}(X)$, then we have $i_s(\oshf) = i_s'(\oshf)$ as images on $\lshf^{\vee}$. Therefore the images on global sections are equal, that is, we have $\oshf(X) s = \oshf(X)s'$ as a sub $\oshf(X)$-module of $\lshf(X)$. Therefore, we have $\pi(s) = \pi(s') \Leftrightarrow \exists c \in \oshf(X)^\times,\  \ s = cs'$. Thus $\phi_{X, \shf{L} } $ is surjective.

To show the naturality, we have to show $\phi_{X, \shf{L}}(s) \otimes_{\oshf_X} \oshf_Y = \phi_{Y, \shf{L}}(s \otimes 1)$, for any $s \in \lshf^{\vee} (X) \setminus \{0 \}$. This is obvious from the definition of the morphism of the tensor product functor.
\end{proof}
\begin{cor}\label{ccorproj} Let $\ccor_{d,e}$ be the fine moduli space of divisorial correspondences of degree $(d,e)$. Then we have $\ccor_{d,e} \simeq (\af^{(d+1)(e+1)} \setminus \{0\})/\gmult \simeq \proj^{(d+1)(e+1)-1}$.
\end{cor}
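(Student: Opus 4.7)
The plan is to deduce the corollary directly from the preceding proposition, applied to $X = \pone\times\pone$ and $\lshf = \oshf(-d,-e)$. The two ingredients to compute are $\lshf^{\vee}(X)$ and $\oshf(X)^{\times}$. By the K\"unneth formula (or a direct calculation on $\pone\times\pone$),
\[ \lshf^{\vee}(X) = H^0(\pone\times\pone,\oshf(d,e)) = V_{d,e}, \]
a $k$-vector space of dimension $(d+1)(e+1)$, which the monomial basis $\{x_0^{d-i}x_1^{i}y_0^{e-j}y_1^{j}\}$ identifies with $\af^{(d+1)(e+1)}$; and since $\pone\times\pone$ is connected and proper over $k$, we have $\oshf(X)^{\times} = k^{\times} = \gmult(k)$. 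Substituting these into the bijection $\phi_{X,\lshf}$ from the preceding proposition yields $\fct{P}(\oshf(-d,-e))\simeq (\af^{(d+1)(e+1)}\setminus\{0\})/\gmult$ on $k$-points, and the second identification with $\proj^{(d+1)(e+1)-1}$ is the standard $\gmult$-quotient construction of projective space.

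To promote this set-theoretic bijection to an isomorphism of fine moduli spaces, I would invoke the naturality clause of the preceding proposition relative to the projection $p_{\pone\times\pone}\colon T\times\pone\times\pone \to \pone\times\pone$ for a test scheme $T$. A $T$-family of divisorial correspondences of degree $(d,e)$ is a closed subscheme $\shf{C}\subset T\times\pone\times\pone$, flat over $T$, whose fiber over each $t$ has ideal sheaf isomorphic to $\oshf(-d,-e)$. By the Seesaw principle, applied via $H^0(\pone\times\pone,\oshf)=k$ and cohomology-and-base-change, the line bundle $\ishf_{\shf{C}}\otimes p_{\pone\times\pone}^{*}\oshf(d,e)$ is pulled back from $T$, say equal to $p_T^{*}\shf{N}$ for some line bundle $\shf{N}$. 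Tensoring the inclusion $\ishf_{\shf{C}}\hookrightarrow \oshf$ with $p_{\pone\times\pone}^{*}\oshf(d,e)$ and pushing forward by $p_T$ then yields an everywhere-nonzero inclusion of $\oshf_T$-modules $\shf{N}\hookrightarrow V_{d,e}\otimes_k\oshf_T$, which is exactly the functor-of-points data of a morphism $T\to \proj(V_{d,e}) = \proj^{(d+1)(e+1)-1}$. The universal family is then recovered from the tautological inclusion $\oshf(-1)\hookrightarrow V_{d,e}\otimes\oshf$ on $\proj^{(d+1)(e+1)-1}$.

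The main obstacle, really only bookkeeping, is in the second step: one has to check that the Seesaw twist together with the naturality of $\phi$ matches the standard functor-of-points description of projective space as parameterizing line sub-bundles of a trivial bundle, so that every $T$-family arises uniquely as the pullback of the universal family. Once this dictionary is established, both asserted isomorphisms follow directly.
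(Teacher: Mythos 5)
Your argument is correct and matches the paper's (implicit) proof: the corollary is deduced directly from the preceding proposition with $X=\pone\times\pone$ and $\lshf=\oshf(-d,-e)$, using $\lshf^{\vee}(X)=V_{d,e}$ of dimension $(d+1)(e+1)$ and $\oshf(X)^{\times}=k^{\times}$. The paper gives no separate proof and leaves the passage from a natural bijection to a fine moduli isomorphism implicit, so your second paragraph (seesaw, cohomology-and-base-change, the subbundle description of $\proj(V_{d,e})$) supplies bookkeeping the paper omits rather than taking a different route.
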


%\begin{prop} Let $f: \pone \to \pone$ be a morphism of degree $d$. Then the morphism 
%\[ \fct{id} \times f : \pone \to \pone \times \pone \] 
%is a closed immersion and 
%\end{prop}
%\begin{proof} We regarded $f$ as a closed subscheme of $\pone \times \pone $ given by the closed immersion $\fct{id}_{\pone} \times f : \pone \to \pone \times \pone .$ We fixed a basis of $\oshf_\pone(1)$ as $\{ x_0,x_1 \}$. Let $([x'_0 : x'_1] , [y'_0 : y'_1])$ be the homogeneous coordinates of the codomain $\pone \times \pone $ of $\fct{id} \times f$ as 
%\[  x'_1 x_0 = x'_0 x_1  \text{ on } \oshf_{\pone}(1) \otimes k( [x'_0 : x'_1]) \]. 
%If $f([x'_0 : x'_1]) = [f_0 (x'_0 , x'_1) : f_1(x'_0 , x'_1)]$, then we have $f^*(x_0) = $
%\end{proof} 

%Let us write $f : \pone \to \pone $ is given by $f([x_0 : x_1]) = [ f_0 (x) : f_1(x) ]$ with homogeneous polynomials $f_0 , f_1$. Let $\phi := (\fct{id} \times f)^\sharp$ be the morphism $\oshf_{\pone \times \pone } \to (\fct{id}_{\pone } \times f)_* \oshf_{\pone } $ which is a part of the morphism $\fct{id} \times f$ of ringed spaces. 

%\begin{deff}For a set of $n$ lattice points $A = \{a_1 ,\ldots , a_n \} \subset \zahl^k$, $A$-discriminant is a polynomial of $A$-indexed variables $R(a_1, \ldots , a_n)$ that characterizes $f(x_{a_1},\ldots , x_{a_n} ).$
%\end{deff}
\subsection{Resultant and Composition}
For a Laurent polynomial $f$ in a variable $x$, we write $f[x^i]$ for the coefficient of $x^i$.
\begin{deff} Let $R$ be a commutative ring $R$ and $x$ a variable. For $R[x]_d := \{ f \in R[x] \mid \deg_x f \leq d \}$, the resultant $\fct{res}_{x,(d,e)} : R[x]_d \times R[x]_e \to R$ is defined as the determinant of Sylvester matrix
\[
\fct{res}_{x,(d,e)}(f ,g) := \left| \begin{array}{ccccccccc}
f[x^0] & f[x^1] &      & \cdots &     f[x^d]     &      & &\\ 
     & f[x^0] & f[x^1] &           &  \cdots      & f[x^d] & &\\
     &      & \ddots &\ddots &         & \ddots   & \ddots &\\ 
     &      &      & f[x^0]      & f[x^1]        &           &  \cdots  & f[x^d] \\
g[x^0] & g[x^1] &    & \cdots &           g[x^e]     &      & &\\ 
     & g[x^0] & g[x^1]   &    &  \cdots  &         g[x^e] & &\\
     &      & \ddots &\ddots &         & \ddots     & \ddots &\\ 
     &      &      & g[x^0]      & g[x^1]        &          &  \cdots    & g[x^e]
\end{array} \right|  \]

For homogeneous polynomials of two variables $x,y$, $F(x,y) = x^df(\frac{y}{x})$ and $G(x,y) = x^ef(\frac{y}{x})$ of degree $d$ and $e$ respectively, we define the {\em homogeneous resultant} \[ \fct{res}_{[x,y]}(F(x,y) , G(x,y)) := \fct{res}_{x,(d,e)}(f(x),g(x)). \]
\end{deff}
\begin{rem}(\cite[Chapter 2.4]{Sil2}) For an integral domain $R$, the homogeneous resultant of two homogeneous polynomials on $R$ is $0$ if and only if the polynomials have a common factor as homogeneous polynomials over $\fct{Frac}(R)$.
\end{rem}

\begin{prop}\label{resequiv} For any regular matrix $g = \binom{a \ b}{c \ d} \in GL_2(\cpx )$ and two homogeneous polynomials $F(x,y) $ and $G(x,y)$, we have 
\[ \fct{res}_{[x,y]}(F(g\cdot (x,y)) , G(g \cdot (x,y))) = (\det g)^{\fct{deg} F \cdot \fct{deg} G }\fct{res}_{[x,y]}(F(x,y), G(x,y)), \]
where $g \cdot (x,y) := (ax+by , cx+dy)$.
\end{prop}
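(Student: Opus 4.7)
The plan is to establish the identity by reducing to products of linear forms and then to a single bracket computation. First, both sides of the claimed equality are polynomial expressions in the coefficients of $F$, the coefficients of $G$, and the entries $a,b,c,d$, so it suffices to verify the identity on a Zariski-dense subset of the parameter space. Restricting to the open locus where $F$ and $G$ both split into distinct linear factors over $\cpx$, I may write
\[
F(x,y) = \prod_{i=1}^{d}(\alpha_i x - \beta_i y), \qquad G(x,y) = \prod_{j=1}^{e}(\gamma_j x - \delta_j y).
\]

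Next, I would invoke the classical product formula for the homogeneous resultant,
\[
\fct{res}_{[x,y]}(F, G) = \varepsilon \prod_{i,j}(\alpha_i \delta_j - \beta_i \gamma_j),
\]
where $\varepsilon \in \{\pm 1\}$ is a sign depending only on $(d,e)$ arising from the Sylvester-matrix convention used in the paper; this identity is derived from the single-variable formula $\fct{res}(f,g) = a_d^{\,e} b_e^{\,d} \prod_{i,j}(r_i - s_j)$ via the correspondence $\fct{res}_{[x,y]}(F,G) = \fct{res}_{x,(d,e)}(f,g)$. A direct expansion then shows that the pullback of a linear form $\alpha x - \beta y$ by $g$ is $\alpha' x - \beta' y$ with $\alpha' = a\alpha - c\beta$ and $\beta' = d\beta - b\alpha$, whence
\[
\alpha_i' \delta_j' - \beta_i' \gamma_j' = (ad - bc)(\alpha_i \delta_j - \beta_i \gamma_j) = (\det g)(\alpha_i \delta_j - \beta_i \gamma_j).
\]
Multiplying over the $d\cdot e$ pairs $(i,j)$ and noting that the sign $\varepsilon$ is the same on both sides produces the claimed factor $(\det g)^{de}$.

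The main obstacle is precisely invoking the product formula in the convention used in the paper, since the Sylvester matrix orders coefficients in increasing degree, which can introduce signs; once this is in place, the rest is a short bilinear-bracket computation. As a sanity check, in the base case $d = e = 1$, one verifies directly that the $2 \times 2$ Sylvester determinant of the pulled-back pair equals $(\det g)$ times the original one, in agreement with the general formula.
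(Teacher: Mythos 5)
Your proof is correct, but it is worth noting that the paper does not actually prove this proposition: it simply cites \cite[Chapter 12.1]{G-K-Z}, so you have supplied a genuine argument where the paper defers to a reference. Your route is the classical one: reduce by Zariski density to the locus where $F$ and $G$ split into distinct linear factors with no root at infinity, invoke the product formula $\fct{res}_{[x,y]}(F,G)=\varepsilon\prod_{i,j}(\alpha_i\delta_j-\beta_i\gamma_j)$, and check that each bracket $\alpha_i\delta_j-\beta_i\gamma_j$ picks up exactly one factor of $\det g$ under the substitution (your formulas $\alpha'=a\alpha-c\beta$, $\beta'=d\beta-b\alpha$ and the identity $\alpha'\delta'-\beta'\gamma'=(ad-bc)(\alpha\delta-\beta\gamma)$ both check out, as does the $d=e=1$ sanity computation). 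Two points deserve the care you already flag: the sign $\varepsilon$ in the product formula under the paper's increasing-degree Sylvester convention depends only on $(d,e)$ and hence cancels between the two sides; and the dehomogenization $f(t)=F(1,t)$ can drop degree when $F$ has a root at infinity, which is exactly why you should derive the product formula only on the dense locus where all $\alpha_i,\gamma_j\neq 0$ and let polynomiality extend the identity. The reference G-K-Z obtains the same covariance more structurally (from the characterization of the resultant as the irreducible defining equation of the incidence divisor, which forces any $\fct{GL}_2$-translate to differ by a character, necessarily a power of $\det$); that approach generalizes to multivariate resultants, while yours is more elementary and self-contained for binary forms, and would be a reasonable substitute for the citation in the text.
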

\begin{proof} See \cite[Chapter 12.1]{G-K-Z}.
\end{proof}

In the theory of arithmetic dynamics, a homogeneous resultant is familiar as a value which determines well-definedness of a rational self-map over $\pone$ and gives a Lipschitz constant with respect to the chordal metric (see \cite[Chapter 2.4]{Sil2}). In the rest of this section, we will see composition of correspondences is also expressed by the resultant. This expression roughly follows from the following observation: composition is substitution, substitution is elimination and elimination is expressed by (homogeneous) resultant.
\begin{prop} \label{affres} %Let $f(x,y), g(x,y) \in k[x,y]$ be polynomials and $d$ and $e$ be positive integers such that the resultant $\fct{res}_{z,(d,e)} (f(x,z) , g(z,y))$ is defined and nonzero polynomial.
Let $C = \fct{Spec}k[x,y]/(f(x,y))$ and $D = \fct{Spec}k[x,y]/(g(x,y))$ be closed subschemes of $\af^2$ and $i_C$ and $i_D$ their inclusion morphisms respectively. Let $p : \widetilde{C \circ D} \to \af^2$ be the morphism defined as the composition of the morphisms in upper row of the following diagram
 \[
 \begin{tikzcd}
 \widetilde{C \circ D} \rar \dar \arrow[phantom]{dr}{\mathrm{p.b.}}& \af^3 
 \arrow{d}{\mathrm{id}\times \Delta \times \mathrm{id}} \arrow{r}{p_{13}}& \af^2 \\
 C \times D \arrow{r}{i_C \times i_D} & \af^4&
 \end{tikzcd}
 \] 
where $\Delta : \af^1 \to \af^2$ is the diagonal morphism. Then the scheme-theoretic image of $p$, denoted by $C \circ D \subset \af^2$, is written as 
\[ C \circ D \simeq \fct{Spec} k[x,y]/(\fct{res}_{z,(d,e)}(f(x,z),g(z,y)) ) \to \af^2. \]
%Moreover, the morphism $C \circ D \to \fct{Spec} k[x,y]/(\fct{res}_{z,(d,e)}(f(x,z),g(z,y)))$ is homeomorphism.
\end{prop}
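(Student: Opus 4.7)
The plan is to compute the pullback $\widetilde{C \circ D}$ ring-theoretically, to identify the scheme-theoretic image as the spectrum of the quotient of $k[x, y]$ by the elimination ideal of $z$, and then to verify that this elimination ideal is principal with the resultant as generator.

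First, since fiber products of affine schemes correspond to tensor products of coordinate rings, and the map $\id \times \Delta \times \id : \af^3 \to \af^4$ corresponds to the ring map $k[u_1, u_2, u_3, u_4] \to k[x, z, y]$ sending $u_1 \mapsto x$, $u_2, u_3 \mapsto z$, $u_4 \mapsto y$, I obtain the identification $\widetilde{C \circ D} = \fct{Spec}\, k[x, y, z]/(f(x, z), g(z, y))$. The morphism $p$ then corresponds to the ring homomorphism $\phi : k[x, y] \hookrightarrow k[x, y, z] \twoheadrightarrow k[x, y, z]/(f(x, z), g(z, y))$, so the scheme-theoretic image of $p$ is $\fct{Spec}(k[x, y]/\ker \phi)$, where $\ker \phi$ equals the elimination ideal $(f(x, z), g(z, y)) \cap k[x, y]$.

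The remaining task is to show that this elimination ideal equals the principal ideal generated by $r := \fct{res}_{z, (d, e)}(f(x, z), g(z, y))$. One containment follows from a Sylvester--B\'ezout identity: cofactor expansion of the Sylvester matrix $M$ produces explicit $A, B \in k[x, y, z]$ with $r = A \cdot f(x, z) + B \cdot g(z, y)$, so $r \in \ker \phi$. For the reverse containment, setting $R = k[x, y]$ and viewing $f(x, z), g(z, y)$ as elements of $R[z]$ of $z$-degrees $d$ and $e$, any $h \in (f, g) \cap R$ should be shown to be divisible by $r$ by combining the division algorithm in $R[z]$---which normalizes a representation $h = Pf + Qg$ so that $\deg_z P < e$ and $\deg_z Q < d$---with Cramer's rule applied to $M$.

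The main obstacle I expect is that the leading coefficients of $f(x, z)$ and $g(z, y)$ in $z$ are polynomials in $R$ rather than units, so the division algorithm does not apply directly. I would circumvent this by first localizing $R$ at these leading coefficients, performing the argument in the localization, and then descending the divisibility back to $R$ itself; alternatively, the matrix identity $r \cdot I = \fct{adj}(M) \cdot M$ applied to the Sylvester matrix $M$ yields the required divisibility directly, without any appeal to division.
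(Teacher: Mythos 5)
Your reduction of the statement to commutative algebra is sound and matches the paper's first step: the paper also arrives at $\widetilde{C\circ D}=\fct{Spec}\,k[x,y,z]/(f(x,z),g(z,y))$ (via a chain of pullback squares rather than tensor products), and identifying the scheme-theoretic image with $\fct{Spec}(k[x,y]/\ker\phi)$, $\ker\phi$ the elimination ideal, is correct. Your forward containment is also fine: the Sylvester--B\'ezout identity gives $r=Af(x,z)+Bg(z,y)$, hence $(r)\subseteq (f(x,z),g(z,y))\cap k[x,y]$. The paper itself dispatches the remaining step with the single sentence ``the assertion follows from the property of resultant,'' so you are attempting to fill a step the paper leaves implicit.

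The genuine gap is that the reverse containment you propose to prove is false in general, so no refinement of the division-algorithm or Cramer's-rule argument can close it. Take $f(x,y)=y^2+x$ and $g(x,y)=x^2+y$, so $f(x,z)=z^2+x$ and $g(z,y)=z^2+y$. Then $r=\fct{res}_{z,(2,2)}(z^2+x,\,z^2+y)=(x-y)^2$, but $x-y=f(x,z)-g(z,y)$ lies in the elimination ideal and is not divisible by $(x-y)^2$; the scheme-theoretic image here is the reduced line $V(x-y)$, not $V((x-y)^2)$. Note that both polynomials are already monic in $z$, so localizing at the leading coefficients changes nothing, and the identity $r\cdot I=\fct{adj}(M)\cdot M$ only yields $r\,z^i\in(f,g)$ for each $i$, i.e.\ the containment you have already established, not its converse. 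What is true in general is only that the image is contained in $V(r)$ and agrees with it set-theoretically away from the common vanishing of the leading coefficients (equivalently, $(r)$ is the $0$-th Fitting ideal of $p_*\oshf_{\widetilde{C\circ D}}$ rather than the annihilator); the two notions coincide for generic $f,g$, which is all the paper needs to define the composition as a rational map on moduli. A complete writeup should either restrict to that generic locus or restate the conclusion in terms of divisors/Fitting ideals rather than the scheme-theoretic image.
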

\begin{proof} The following diagrams
 \[
 \begin{tikzcd}
 C \arrow{r}{i_C} \arrow{d}[swap]{\fct{id} \times (p_2 \circ i_C)} & \af^2 \arrow{d}{\mathrm{id} \times \Delta} & & D \arrow{r}{i_D} \arrow{d}[swap]{(p_1 \circ i_D) \times \fct{id}} & \af^2 \arrow{d}{\Delta \times \fct{id}} \\
 C \times \af^1 \arrow{r}{i_C \times \fct{id} } & \af^3 & &  \af^1 \times D \arrow{r}{\fct{id} \times i_D} & \af^3
 \end{tikzcd}
 \]
are pullback diagrams. On the other hand, we have a pullback diagram
 \begin{equation}\label{icid}
 \begin{tikzcd} C \times D \arrow{d}{\fct{id} \times i_D} \arrow{r}{i_C \times \fct{id} } & C \times \af^2 \arrow{d}{\fct{id} \times i_C }\\ \af^2 \times D \arrow{r}{\fct{id} \times i_D} & \af^2 \times \af^2.
 \end{tikzcd}
 \end{equation}
By base-changing (\ref{icid}) by $\af^3 \xrightarrow{\fct{id} \times \Delta \times \fct{id} } \af^4$, we obtain the pullback diagram
\[ \begin{tikzcd} \widetilde{C \circ D} \arrow{r} \arrow{d} \arrow[phantom]{dr}{\mathrm{p.b.}} & C \times \af^1 \arrow{d}{i_C \times \fct{id} } \\
\af^1 \times D \arrow{r}{\fct{id} \times i_D} & \af^3.
 \end{tikzcd}
\]
This means $ \widetilde{C \circ D}$ is equal to $\fct{Spec} k[x,y,z]/ (f(x,z), g(z,y))$. The assertion follows from the property of resultant.
\end{proof}

\begin{prop}\label{corrcomp} Let $C,D \subset \pone \times \pone$ be divisorial correspondences, $F = (f_{ij}), \ G = (g_{kl})$ be the coefficient matrices of $C$ and $D$ respectively. Let us consider the following diagram
 \begin{equation}
 \begin{tikzcd}
  \widetilde{C \circ D} \rar \dar \arrow[phantom]{dr}{\mathrm{p.b.}}& \pone \times \pone \times \pone 
 \arrow{d}{\mathrm{id}\times \Delta \times \mathrm{id}} \arrow{r}{p_{13}}& \pone \times \pone \\
 C \times D \arrow{r}{\subset \times \subset} & \pone \times \pone \times \pone \times \pone& 
 \end{tikzcd}\label{comp}
 \end{equation}
 Then the composition of the morphisms in upper row $ \widetilde{C \circ D} \to \pone \times \pone$ factors through the divisorial correspondence given by the coefficient matrix $H = (h_{mn})$ such that
\[ \sum_{m,n} h_{mn}x_0^{dd'-m}x_1^{m}y_0^{ee'-n}y_1^{n} = \fct{res}_{[z_0,z_1]} \left( \sum_{i,j}f_{ij}x_0^{d-i}x_1^iz_0^{e-j}z_1^j , \sum_{k,l}g_{kl}z_0^{d'-k}z_1^{k}y_0^{e'-l}y_1^l \right) \]
if $H \neq 0$.
\end{prop}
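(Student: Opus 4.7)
The plan is to reduce the statement to the affine version already proved in Proposition \ref{affres} by working on the standard affine cover of $\pone \times \pone \times \pone$ and gluing via the transformation rule for the homogeneous resultant (Proposition \ref{resequiv}).

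First, I would verify that the right-hand side is a bihomogeneous polynomial in $(x_0,x_1)$ and $(y_0,y_1)$ of bidegree $(dd', ee')$, so that it really does define a divisor of the claimed bidegree via a coefficient matrix $H = (h_{mn})_{0 \leq m \leq dd',\, 0 \leq n \leq ee'}$. This is a straightforward weight count on the Sylvester matrix of $\fct{res}_{z,(e,d')}$: among its $e+d'$ rows, $d'$ rows contain the coefficients of $F(x,z)$ in $z$, each homogeneous of degree $d$ in $x$, and $e$ rows contain the coefficients of $G(z,y)$ in $z$, each homogeneous of degree $e'$ in $y$. The determinant is therefore bihomogeneous of bidegree $(dd', ee')$ in $(x,y)$, as required.

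Second, I would fix the affine chart $U_{000} := \{x_0 \neq 0\} \times \{z_0 \neq 0\} \times \{y_0 \neq 0\}$ of $\pone \times \pone \times \pone$ with coordinates $x := x_1/x_0$, $z := z_1/z_0$, $y := y_1/y_0$. On this chart, $C$ and $D$ are cut out by the dehomogenizations $f(x,z) := F(1,x,1,z)$ and $g(z,y) := G(1,z,1,y)$, of degrees $e$ and $d'$ in $z$ respectively. Proposition \ref{affres} applied to these affine schemes identifies the scheme-theoretic image of the restriction $\widetilde{C \circ D} \cap U_{000} \to \aff^2$ with $\fct{Spec}\, k[x,y]/(\fct{res}_{z,(e,d')}(f(x,z), g(z,y)))$. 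The key point is then that this affine resultant is precisely the specialization $H(1,x,1,y)$ of the homogeneous resultant at $x_0 = z_0 = y_0 = 1$, which is the local equation of $V(H)$ on $\{x_0 \neq 0\} \times \{y_0 \neq 0\}$; so on this chart the composition factors through $V(H)$.

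Third, one repeats the argument on each of the remaining affine charts. A change of $z$-chart, say passing to $w := z_0/z_1$, replaces $\fct{res}_z(f,g)$ by $\fct{res}_w(F(x_0,x_1,w,1), G(w,1,y_0,y_1))$, which by Proposition \ref{resequiv} (with $g = \binom{0\ 1}{1\ 0}$) differs from the original by a nonzero scalar and therefore cuts out the same closed subscheme; changes of $x$- or $y$-chart amount to the compatible dehomogenizations of the single global polynomial $H$. These local factorizations therefore glue to a global factorization $\widetilde{C \circ D} \to V(H) \hookrightarrow \pone \times \pone$, which is what the proposition asserts. The main obstacle is this gluing step — ensuring that a single global divisor $V(H)$ describes the image on every chart simultaneously — but Proposition \ref{resequiv} was tailored precisely to control how the resultant transforms under substitutions in the eliminated variable, so once the bidegree bookkeeping of the first step is in place, the gluing is forced and $H$ is determined up to scalar by its affine specializations.
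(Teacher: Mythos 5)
Your proposal is correct and follows essentially the same route as the paper: cover the relevant locus by standard affine charts, apply Proposition \ref{affres} on each chart, and glue. You are somewhat more explicit than the paper about the gluing step (invoking Proposition \ref{resequiv} for the change of the eliminated $z$-chart and checking the bidegree $(dd',ee')$ bookkeeping), but the underlying argument is the same.
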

\begin{proof}
For a standard open covering of $\pone$, $\{ U_0 = \pone \setminus \{ 0 \}, U_1 = \pone \setminus \{ \infty \}  \}$, we denote $U_{\alpha_1\alpha_2 \ldots \alpha_n}$ for the open subscheme $U_{\alpha_1} \times \cdots \times U_{\alpha_n}$ of $(\pone)^n$. The closed subscheme $\fct{Im}(\fct{id} \times \Delta \times \fct{id})$ of $(\pone)^4$ is covered by the open subschemes $U_{\alpha \beta \beta \gamma}$ of $(\pone)^4$. Proposition \ref{affres} gives the construction of the upper row of (\ref{comp}) over each $U_{\alpha \beta \beta \gamma} \simeq \aff^4$, where the codomain is restricted to $U_{\alpha \gamma} \simeq \aff^2 \subset \pone \times \pone$. By combining these constructions, we obtain the assertion.
\end{proof}
\begin{rem}\label{nonzerocor} In the situation of Proposition \ref{corrcomp}, the properties of resultants gives that we have $H = 0$ if and only if $f(x,y) = \bar{f}(x,y)(ay_0 + by_1)$ and $g(x,y) = (ax_0 + bx_1)\bar{g}(x,y)$ for some $a,b \in \bar{k}$ and some divisorial correspondences $\bar{f}$ and $\bar{g}$.
\end{rem}
\begin{deff} {\em The composition map} $ \circ  : \ccor_{d,e} \times \ccor_{d',e'} \dashrightarrow \ccor_{dd' , ee'} $ is the map that sends the pair of divisorial quasi-correspondence $(f(x,y) , g(x,y))$ to the quasi-correspondence $(f \circ g)(x,y) := \fct{res}_z(f(x,z) , g(z,y))$.
\end{deff}
\begin{prop} The composition map is associative, that is, the following diagram is commutative: 
\begin{equation}
\begin{tikzcd}
  \ccor_{d,e} \times \ccor_{d',e'} \times \ccor_{d'' , e''} \arrow[r,dashed]{}{\circ \times \mathrm{id}} \arrow[d,dashed]{}{\mathrm{id} \times \circ} & \ccor_{dd',ee'} \times \ccor_{d'', e''} \arrow[d,dashed]{}{\circ} \\ 
  \ccor_{d,e} \times \ccor_{d'd'',e'e''} \arrow[r,dashed]{}{\circ} & \ccor_{dd'd'',ee'e''}
\end{tikzcd}\label{assoccomp}
\end{equation}
\end{prop}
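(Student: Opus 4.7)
The plan is to identify both sides of diagram (\ref{assoccomp}) with the scheme-theoretic image of a common triple fibre product in $\pone \times \pone$, invoking associativity of fibre products in order to sidestep the iterated-resultant identity that a direct polynomial calculation would require.

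First, I would generalize Proposition \ref{corrcomp} to three factors. For divisorial correspondences $C, D, E \subset \pone \times \pone$, set
\[ \widetilde{C \circ D \circ E} := (C \times D \times E) \times_{(\pone)^6} (\pone)^4, \]
where $(\pone)^4$ embeds into $(\pone)^6$ via $(x, z_1, z_2, y) \mapsto (x, z_1, z_1, z_2, z_2, y)$, and let $\pi : \widetilde{C \circ D \circ E} \to \pone \times \pone$ denote the projection to the first and last factors. Pasting two copies of the pullback square of Proposition \ref{corrcomp} canonically identifies $\widetilde{(C \circ D) \circ E}$ with $\widetilde{C \circ D \circ E}$ as schemes over $\pone \times \pone$, and a symmetric pasting does the same for $\widetilde{C \circ (D \circ E)}$. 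Both identifications follow from the universal property of fibre products applied to the relevant diagonal embeddings. In particular, $(C \circ D) \circ E$ and $C \circ (D \circ E)$ have the same underlying support inside $\pone \times \pone$ on the locus where the iterated compositions are defined.

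Second, I would pass back to the moduli space. Both $(f \circ g) \circ h$ and $f \circ (g \circ h)$ are bihomogeneous of bidegree $(dd'd'', ee'e'')$ in $(x,y)$ and cut out the same divisor on a dense open subset of parameters, hence agree up to a nonzero scalar. Via the identification $\ccor_{dd'd'', ee'e''} \simeq \proj^{(dd'd''+1)(ee'e''+1)-1}$ of Corollary \ref{ccorproj}, this scalar ambiguity is absorbed and the two compositions define the same point of the moduli space.

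The principal obstacle is upgrading set-theoretic coincidence to equality of divisors: the bidegree count alone does not a priori exclude the two resultant-cut polynomials from differing by a common factor arising from excess multiplicity along a shared component. I would handle this by restricting to the dense open locus in $\ccor_{d,e} \times \ccor_{d',e'} \times \ccor_{d'',e''}$ on which none of the pairwise compositions degenerates in the sense of Remark \ref{nonzerocor}; on this locus every intermediate divisor is generically reduced, so the bidegree count forces agreement up to scalar. Since the diagram (\ref{assoccomp}) consists of rational maps between projective spaces, commutativity on a dense open subset extends to commutativity as rational maps, completing the argument.
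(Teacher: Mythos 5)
Your proposal follows essentially the same route as the paper: both identify the two iterated compositions with the common triple fibre product $(C \times C' \times C'') \times_{(\pone)^6} (\pone)^4$ obtained by pasting the pullback squares of Proposition \ref{corrcomp}, restrict to the locus where Remark \ref{nonzerocor} guarantees the rational maps are defined, and conclude by density. Your additional care about upgrading set-theoretic coincidence of supports to equality of divisors (and hence of points in $\proj^{(dd'd''+1)(ee'e''+1)-1}$) is a reasonable elaboration of a step the paper leaves implicit, but it is not a different argument.
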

\begin{proof} By Remark \ref{nonzerocor}, the compositions of the two diagonal paths in (\ref{assoccomp}) are rational map. By Proposition \ref{corrcomp}, the images through the two paths of a general point of $\ccor_{d,e} \times \ccor_{d',e'} \times \ccor_{d'' , e''}$, which indicates the tuple of correspondences $(C,C',C'')$, are both given by the upper row of the following diagram:
\begin{equation*}
 \begin{tikzcd}
  \widetilde{C \circ C' \circ C''} \rar \dar \arrow[phantom]{dr}{\mathrm{p.b.}}& (\pone)^4 
 \arrow{d}{\mathrm{id}\times \Delta \times \Delta \times \mathrm{id}} \arrow{r}{p_{14}}& (\pone)^2 \\
 C \times C'  \times C'' \arrow{r}{\subset \times \subset \times \subset} & (\pone)^6&
 \end{tikzcd}
\end{equation*}
Therefore (\ref{assoccomp}) is commutative.
\end{proof}
\begin{deff} \em{The iteration map} $\Psi_n : \ccor_{d,e} \ratmap \ccor_{d^n , e^n}$ is the map that sends the divisorial quasi-correspondence $f(x,y)$ to the quasi-correspondence $(f \circ f \circ \cdots \circ f)(x,y)$. 
\end{deff}

\begin{prop}\label{conjform} Let $g \in \pglt \simeq \fct{Aut}(\pone)$ be a morphism given by $g ( [x_0 : x_1] ) = [d x_0 + cx_1 : b x_0 + ax_1]$ and $f(x,y)$ be a divisorial quasi-correspondence. Then we have
\begin{equation*} (g \circ f \circ g^{-1}) (x,y) =f (d x_0 +cx_1, b x_0 + a x_1 , d y_0 + cy_1 ,b y_0 + ay_1).
\end{equation*}
\end{prop}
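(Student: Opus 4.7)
The strategy is to view $g$ and $g^{-1}$ as correspondences via their graphs, use the associativity of composition proved just above, and exploit that the graph of an $\pglt$-automorphism is cut out by a $(1,1)$-form linear in each variable, so that the resultant in Proposition \ref{corrcomp} collapses into a substitution.

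First, I write down the defining $(1,1)$-form of $\Gamma_g$:
\[ \phi_g(x,y) := (dx_0 + cx_1)y_1 - (bx_0 + ax_1)y_0, \]
which vanishes precisely when $[y_0:y_1] = g([x_0:x_1])$. Applying Proposition \ref{corrcomp} to $\Gamma_g \circ f$, the defining equation becomes $\fct{res}_{[z_0,z_1]}(\phi_g(x,z),\,f(z,y))$. Since $\phi_g(x,z)$ is linear in $(z_0,z_1)$, a direct computation (dehomogenize in $z_1$ and use $\fct{res}(P,Q) = \mathrm{lc}(P)^{\deg Q}\prod Q(\text{roots of } P)$, then rehomogenize) gives the general identity
\[ \fct{res}_{[z_0,z_1]}(\alpha z_0 + \beta z_1,\,q(z_0,z_1)) = q(\beta,-\alpha) \]
for any form $q$ of positive degree. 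Substituting $\alpha = -(bx_0+ax_1)$ and $\beta = dx_0 + cx_1$ yields
\[ (\Gamma_g \circ f)(x,y) = f(dx_0 + cx_1,\,bx_0 + ax_1,\,y_0,\,y_1), \]
that is, the first pair of arguments of $f$ has been replaced by $g(x_0,x_1)$.

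By associativity, $g \circ f \circ g^{-1} = (\Gamma_g \circ f) \circ \Gamma_{g^{-1}}$. The graph $\Gamma_{g^{-1}}$ is cut out by the transpose form $\phi_g(y,x) = (dy_0 + cy_1)x_1 - (by_0 + ay_1)x_0$, which is again linear in the intermediate variable (renamed $z$ for the resultant) with homogeneous root $[z_0:z_1] = [dy_0 + cy_1 : by_0 + ay_1]$. Running the same resultant-as-substitution argument on the right replaces the second pair of arguments of $f$ by $g(y_0,y_1)$, producing exactly the formula in the statement.

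The only real subtlety is bookkeeping the scalar and sign factors from the homogeneity of $f$ and the leading-coefficient term in the resultant formula; but since divisorial (quasi-)correspondences are determined by their defining form only up to a nonzero scalar, these factors do not affect the assertion.
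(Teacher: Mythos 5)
Your proof is correct and follows essentially the same route as the paper: both reduce $g \circ f \circ g^{-1}$ to two resultants of $f$ against the $(1,1)$-forms cutting out $\Gamma_g$ and $\Gamma_{g^{-1}}$, and observe that a resultant against a form linear in the elimination variable collapses to evaluation at its root. The paper obtains this collapse by expanding the Sylvester determinant explicitly, while you invoke the general identity $\fct{res}_{[z_0,z_1]}(\alpha z_0+\beta z_1, q) = q(\beta,-\alpha)$ (up to the harmless projective scalar), which is the same computation packaged as a lemma.
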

\begin{proof}Let $g \in \pglt$ be the morphism given by $g ( [x_0 : x_1] ) = [d x_0 + cx_1 : b x_0 + ax_1]$. The graph correspondence $\Gamma_g$ is given by a divisorial quasi-correspondence
\[ G = G(x,y) = (bx_0 + ax_1)y_0 - (dx_0 + cx_1 )y_1 . \]
Let $f \in \ccor_{d,e}$ be a divisorial correspondence. We have
\begin{align*} \fct{res}_{[z_0 , z_1]} (f(x,z) , G(z,y)) & = \left| \begin{array}{cccc}
f[y^0] & f[y^1] & \cdots &     f[y^d]  \\
G[x^0] & G[x^1] & & \\
 & \ddots & \ddots & \\
 & & G[x^0] & G[x^1]
\end{array} \right| \\
&= \sum_{i = 0}^d (-1)^{i-1} f[y^i] G[x^1]^{d-i} G[x^0]^{i} \\
&= \sum_{i=0}^d f[y^i] G[x_1]^{d-i}(-G[x_0])^i \\
&= \sum_{i=0}^d f[y^i] (ay_0 - cy_1)^{d-i}(-b y_0 + dy_1 )^i \\
&= f(x_0,x_1,ay_0 - cy_1 , -by_0 + dy_1)
\end{align*}
and
\begin{align*} \fct{res}_{[z_0 , z_1]} (G(x,z) , f(z,y)) & = \left| \begin{array}{cccc}
G[y^0] & G[y^1] & & \\
 & \ddots & \ddots & \\
 & & G[y^0] & G[y^1] \\
f[x^0] & f[x^1] & \cdots &     f[x^d]  \\
\end{array} \right| \\
&= \sum_{i = 0}^d (-1)^{d+i-1} f[x^i] G[y^1]^{d-i} G[y^0]^{i} \\
&= \sum_{i=0}^d f[x^i] (-G[y_1])^{d-i}G[y_0]^i \\
&= \sum_{i=0}^d f[x^i] (d x_0 + cx_1)^{d-i}(b x_0 + ax_1 )^i \\
&= f(d x_0 + cx_1, b x_0 +a x_1 , y_0 , y_1).
\end{align*}
For the inverse element $g^{-1} \in \fct{PGL}_2$, the graph variety $\Gamma_{g^{-1}}$ is given by a divisorial quasi-correspondence
\[ G^{-1}(x,y) := (-bx_0 + dx_1)y_0 - (ax_0 - cx_1 )y_1. \]
Therefore we have
\begin{align*} (g \circ f \circ g^{-1}) (x,y) &= (g \circ f) (x_0,x_1, dy_0 + cy_1 ,by_0 + ay_1) \\
&= f (d x_0 +cx_1, b x_0 + a x_1 , d y_0 + cy_1 ,b y_0 + ay_1).
\end{align*}
\end{proof}
\begin{cor}\label{conjact}
For the conjugation action
\begin{equation} \ccor_{d,e} \times \pglt \to \ccor_{d,e} : (f,g) \mapsto g \circ f \circ g^{-1 },  \label{conju}
\end{equation}
the induced action of $\slt$ on $\ccor_{d,e} \simeq \proj (V_{d,e})$ is given by a representation on $V_{d,e}$ and the representation is isomorphic to $V_d \otimes V_e$.
\end{cor}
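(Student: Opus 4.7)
My approach is in two steps: first, use Proposition \ref{conjform} to linearize the action on $V_{d,e}$; then, decompose that representation along the $x$- and $y$-variables and identify each factor with $V_d$ (respectively $V_e$).

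For the first step, the explicit formula
\[ (g \circ f \circ g^{-1})(x_0,x_1,y_0,y_1) = f(dx_0+cx_1,\ bx_0+ax_1,\ dy_0+cy_1,\ by_0+ay_1) \]
from Proposition \ref{conjform} is manifestly $k$-linear in $f \in V_{d,e}$. Hence the $\pglt$-action on $\ccor_{d,e} \simeq \proj(V_{d,e})$ (Corollary \ref{ccorproj}) is induced from a linear action on $V_{d,e}$, and its pullback under $\slt \to \pglt$ yields an $\slt$-representation structure on $V_{d,e}$. That this is an actual group action (and not just a family of linear maps) is automatic, because the $\pglt$-action on $\ccor_{d,e}$ arose from the diagonal automorphism action on $\pone \times \pone$, and the functoriality of global sections of $\oshf(d,e)$ is the substitution in the formula above.

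For the second step, observe that the substitution acts separately on $(x_0,x_1)$ and $(y_0,y_1)$, so the obvious bidegree identification
\[ V_{d,e} \simeq V_{d,0} \otimes V_{0,e} \]
is $\slt$-equivariant (on the $x$-monomials it is the substitution $x_i \mapsto \ldots$ alone, and likewise on the $y$-monomials). Now $V_{d,0} = H^0(\pone,\oshf(d)) = \fct{Sym}_d V_{1,0}$, and the two-dimensional space $V_{1,0} = kx_0 \oplus kx_1$ is canonically identified with the dual of $V_1$, since $\pone = \proj(V_1)$ and $x_0,x_1$ are the dual basis to the standard basis of $V_1 = k^2$. By Proposition \ref{dualrep} the representations $V_1$ and $V_1^*$ are isomorphic, and since $\fct{char}\,k = 0$ the symmetric power functor commutes with duality (Remark \ref{caniso}); hence $V_{d,0} \simeq V_d$. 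The same argument gives $V_{0,e} \simeq V_e$, and assembling these identifications yields $V_{d,e} \simeq V_d \otimes V_e$ as $\slt$-representations.

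The only subtle point is to ensure that the identification $V_{1,0} \simeq V_1^*$ genuinely matches the action coming from Proposition \ref{conjform}, since the matrix entries $a,b,c,d$ get reshuffled in that formula. However, this reshuffling is precisely the standard duality on the global sections of $\oshf(1)$ under the $\pglt$-action on $\pone$, and any remaining sign or transposition ambiguity is absorbed by the self-duality isomorphism of $V_1$ from Proposition \ref{dualrep}. No explicit computation of the intertwiner is required beyond invoking self-duality and uniqueness of the symmetric-power decomposition.
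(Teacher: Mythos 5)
Your proposal is correct and follows essentially the same route as the paper: both use the explicit substitution formula of Proposition \ref{conjform} to identify the induced linear action on $V_{d,e}$ with the tensor product of the spaces of $d$-ary and $e$-ary forms (i.e.\ $\fct{Sym}_d(V_1^*)^* \otimes \fct{Sym}_e(V_1^*)^*$), and then invoke the self-duality of $V_n$ (Proposition \ref{dualrep}, together with Remark \ref{caniso}) to conclude the representation is isomorphic to $V_d \otimes V_e$. Your version merely spells out the intermediate factorization and the duality bookkeeping in more detail than the paper does.
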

\begin{proof}
By Proposition \ref{conjform} the action of $\slt$ on $V_{d,e}$ is isomorphic to the one on the tensor space of the space of $d$-ary forms and the space of $e$-ary forms, $\fct{Sym}_d(V^*_1)^* \otimes \fct{Sym}_e(V^*_1)^*$. By Proposition \ref{dualrep}, it is isomorphic to $V_d \otimes V_e$.
\end{proof}

\section{Fundamental Properties of The Moduli of Correspondence}

In this section, we give simple generalizations of the results in \cite{Sil} and \cite{Le}, a characterization of the stable/semistable locus of the group action and the rationality of the moduli spaces.

\subsection{Stability of Group Action}
From now on until Theorem \ref{constquot}, we briefly review the geometric invariant theory \cite{MF}.
\begin{deff}(\cite[Definition 1.6]{MF}) Let $G$ be a reductive group scheme and $X$ a scheme with $G$-action $\sigma : G \times X \to X$. For an invertible sheaf $\lshf$ over $X$, an isomorphism $\phi : \sigma^* \lshf \simeq p_2^* \lshf$ is said to be {\em $G$-linearization} if $\phi$ satisfies the cocycle condition \[ p_{23}^* \phi \circ (\id_G \times \sigma)^*\phi = (\mu \times \fct{id}_X)^* \phi  \text{ (on }G \times G \times X).\]
\end{deff}
\begin{rem}\label{linzrep} If $\lshf$ is very ample and $G$ is affine, then $G$-linearization is described as the $G(\oshf(X))$-action on $\lshf(X)$ compatible with $\sigma$.
\end{rem}
\begin{rem}\label{Glinpow} For a $G$-linearization $\phi $ of an invertible sheaf $\lshf$ over a normal scheme $X$, $\phi^n : \sigma^* \lshf^n \simeq p_2^* \lshf^n $ is a $G$-linearization of $\lshf^n$.
\end{rem}
\begin{rem}(\cite[Proposition 1.4]{MF}) If there exists no surjective homomorphism $G \to \mathbb{G}_m$ of group schemes and $X \times_k \bar{k}$ is normal, $G$-linearization $\phi $ of an invertible sheaf $\lshf$ is unique if exists.
\end{rem}
%
%\begin{exam}(\cite{MF}) The natural action of the automorphism group on a variety may not have $G$-linearization even if the action preserves an isomorpism class of a line bundle. If we choose a basis of the two-dimensional free module $\oshf_{\pone}(1)(\pone)$, then it determines an automorphism $f :\pone \to \pone$. This method gives a $\fct{GL}_2$-action on $\pone$ and this is the construction of the group isomorphism $\fct{GL}_2/Z(\fct{GL}_2) = \fct{PGL_2} \simeq \fct{Aut}_{\pone}.$ The action of any elements $\fct{PGL}_2(k)$ on $\pone$ preserves the bundle $\oshf_{\pone}(1)$, but they do not agree over $\fct{PGL}_2$ and we cannot obtain linearization of the action. Here we can obtain a $\fct{PGL}_2$-linearization of the action at $\oshf_{\pone}(2)$ by Proposition \ref{even}.
%\end{exam}

For a given action and a given invertible sheaf, $G$-linearization may not be unique, for instance, if the action is trivial, any regular homomorphism $G \to \fct{Aut}(\lshf)$ gives a $G$-linearization.

\begin{deff}(\cite[Definition 1.7]{MF}) Let $G$ be a reductive group, $X$ an algebraic variety with $G$-action and $P$ a geometric point of $X$.
\begin{enumerate} \item $P$ is said to be {\em pre-stable} if the stabilizer group of $P$ is finite and there exists a $G$-stable affine open neighborhood of $P$.
\end{enumerate}
Moreover, we suppose that $\lshf$ be an ample invertible sheaf over $X$ with $G$-linearization.
\begin{enumerate}
\item $P$ is said to be {\em $\lshf$-semistable} if there exists $f \in H^0(X, \lshf^n)^G$ such that $f(P) \neq 0$ and $X_f$ is affine.
\item $P$ is said to be {\em (proper) $\lshf$-stable} if $P$ is $\lshf$-semistable and pre-stable.
\end{enumerate}
The set of pre-stable (resp. $\lshf$-semistable, $\lshf$-stable) geometric points is the set of geometric points of an open subscheme of $X$ called {\em pre-stable (resp. $\lshf$-semistable, $\lshf$-stable) locus}. We denote the loci by $X^s(\fct{Pre})$ (resp. $X^{ss}(\lshf )$, $X^s(\lshf)$).
\end{deff}
\begin{rem} For a $G$-variety $X$ which is isomorphic to a projective space $\proj (V)$, we sometimes write $X^s$ and $X^{ss}$ for the stable locus and semistable locus of any $\oshf(n)$ with $G$-linearization. 
\end{rem}
\begin{rem}(\cite[Converse 1.12]{MF})\label{uniqlin} If the categorical (resp. the geometric) quotient of $X$ by $G$ exists, then $X = X^{ss}(\shf{L})$ (resp. $X = X^s(\shf{L} )$) for some ample invertible sheaf $\lshf$ over $X$ with $G$-linearization.
\end{rem}
\begin{thm}\label{affquot}(\cite[Theorem 1.1]{MF}) Let $G$ be a reductive group, $X = \fct{Spec} R$ an affine algebraic variety with $G$-action. Then the categorical quotient $X \sslash  G$ is constructed as $\fct{Spec} R^G$.
\end{thm}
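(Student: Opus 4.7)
The plan is to reduce the theorem to the two foundational facts about reductive groups: (i) every rational representation admits a Reynolds operator, i.e. a canonical $G$-equivariant projection onto the invariant subspace, and (ii) Nagata's finite generation theorem. First I would produce the Reynolds operator $\mathcal{R}: R \to R^G$ by expressing $R$ as the union of its finite-dimensional $G$-stable subspaces (such subspaces exist because $G$ acts rationally on $R$) and using reductivity to split off the invariants in each. The Reynolds operator is $R^G$-linear and is the crucial technical tool for everything that follows.

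Next I would establish that $R^G$ is a finitely generated $k$-algebra, so that $Y_0 := \fct{Spec} R^G$ is a legitimate affine variety. Presenting $R$ as a quotient of a polynomial ring on a finite-dimensional $G$-stable generating subspace reduces the problem to the graded case, where finite generation follows from a Hilbert-basis argument: the invariant ideal $R^G_+ R$ is finitely generated by invariants $f_1, \ldots , f_n$, and applying $\mathcal{R}$ to an inductive expansion shows these $f_i$ generate $R^G$ as a $k$-algebra.

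With $R^G$ finitely generated, I would verify that $\pi : X \to Y_0$ dual to $R^G \hookrightarrow R$ is the categorical quotient. For any affine target $Z = \fct{Spec} S$, a $G$-invariant morphism $X \to Z$ corresponds to a ring homomorphism $S \to R$ whose image lies in $R^G$, and therefore factors uniquely through $R^G$. For a general target one glues this construction over an affine cover; the compatibility on overlaps rests on showing that $\pi$ maps distinct closed orbits to distinct points, which uses the Reynolds-operator separation argument: if $Z_1, Z_2 \subset X$ are disjoint $G$-stable closed subsets cut out by ideals $I_1, I_2$ with $I_1 + I_2 = R$, writing $1 = a_1 + a_2$ and applying $\mathcal{R}$ produces invariant functions separating them.

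The main obstacle, and the part where reductivity is essential, is the finite generation step; the remainder is essentially a formal exercise in the universal property of $\fct{Spec}$. Since the full argument appears in \cite[Theorem 1.1]{MF} and has been expounded at length in the standard references on GIT, I state the result here by citation and use it as a black box in the construction of the quotient $\dyn_{d,e} = \ccor_{d,e}^{ss} \sslash \pglt$ in the sequel.
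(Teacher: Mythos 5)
The paper gives no proof of this statement at all: it is quoted verbatim as Theorem~1.1 of Mumford--Fogarty--Kirwan and used as a black box, which is exactly what you propose to do in your final paragraph. Your sketch (Reynolds operator, Hilbert--Nagata finite generation, verification of the universal property via the separation of disjoint $G$-stable closed sets by invariants) is a faithful outline of the standard argument and is valid in the paper's setting, since $k$ has characteristic zero and reductive groups are therefore linearly reductive, which is what the Reynolds-operator construction actually requires.
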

\begin{thm}(\cite[p.40]{MF})\label{constquot} Let $G$ be a reductive group, $X$ a proper algebraic variety with $G$-action, $\lshf$ a very ample invertible sheaf with $G$-linearization. Then the categorical quotient $X^{ss}(\shf{L} ) \sslash  G$ is constructed as $ \fct{Proj} \bigoplus_{i=0}^{\infty } H^0(\lshf^i , X)^G .$
\end{thm}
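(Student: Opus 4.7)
The plan is to reduce the construction to the affine case handled by Theorem \ref{affquot} via a local-to-global argument on $X^{ss}(\lshf)$. Since $\lshf$ is very ample and $X$ is proper, $X$ is isomorphic to $\fct{Proj}(R)$ for the graded section ring $R := \bigoplus_{i \geq 0} H^0(X, \lshf^i)$, and by Remark \ref{linzrep} the $G$-linearization of $\lshf$ induces compatible $G$-actions on each $H^0(X, \lshf^i)$, making $R$ a graded $G$-algebra. The invariant subring $R^G = \bigoplus_{i \geq 0} H^0(X, \lshf^i)^G$ then inherits the grading, and I would take $Y := \fct{Proj}(R^G)$ as the candidate quotient. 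A first nontrivial input, to ensure $Y$ is a scheme of finite type, is Nagata's theorem: since $G$ is reductive and $R$ is finitely generated over $k$, so is $R^G$.

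Next I would construct the quotient morphism locally. By the very definition of $\lshf$-semistability, every geometric point $P \in X^{ss}(\lshf)$ lies in some $X_f$ with $f \in H^0(X, \lshf^n)^G$ and $X_f$ affine; such $X_f$ are $G$-invariant (since $f$ is), and together they form an open cover of $X^{ss}(\lshf)$. On each $X_f = \fct{Spec}(R_{(f)})$, where $R_{(f)}$ denotes the degree-zero part of $R[f^{-1}]$, Theorem \ref{affquot} yields the affine categorical quotient $X_f \sslash G = \fct{Spec}(R_{(f)}^G) = \fct{Spec}((R^G)_{(f)})$, which is exactly the standard affine chart $D_+(f) \subset Y$. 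Under intersections $X_{ff'} = X_f \cap X_{f'}$, the compatibility of localization with taking invariants (again a consequence of reductivity, via exactness of the Reynolds operator) gives that these affine quotients glue along the same gluing data as the standard open cover $\{D_+(f)\}$ of $\fct{Proj}(R^G)$, producing a $G$-invariant morphism $\pi : X^{ss}(\lshf) \to Y$.

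Finally, one checks the universal property: any $G$-invariant morphism $\phi : X^{ss}(\lshf) \to Z$ restricts on each $X_f$ to a $G$-invariant morphism, which factors uniquely through $X_f \sslash G$ by Theorem \ref{affquot}; these factorizations glue to a unique morphism $Y \to Z$ because the restrictions agree on overlaps.

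The main obstacle is twofold. First, the finite generation of $R^G$ (Nagata's theorem) is the nontrivial input that makes the reductivity hypothesis essential and is needed for $\fct{Proj}(R^G)$ to behave as a scheme of finite type over $k$. Second, although the gluing in the middle step looks formal, it silently depends on the fact that $(R_{(f)})^G = (R^G)_{(f)}$; this is where reductivity reappears, via the existence of a $G$-equivariant Reynolds projection $R \twoheadrightarrow R^G$ commuting with localization at invariant elements. Once these two ingredients are granted, the rest is bookkeeping built on Theorem \ref{affquot}.
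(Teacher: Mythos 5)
The paper offers no proof of this statement; it is quoted directly from \cite[p.40]{MF} (it is Theorem~1.10 there), so there is nothing internal to compare against. Your reconstruction --- covering $X^{ss}(\lshf)$ by the affine invariant loci $X_f$, identifying $X_f \sslash G$ with the chart $D_+(f)$ of $\fct{Proj}(R^G)$ via $(R_{(f)})^G = (R^G)_{(f)}$, and gluing, with Nagata's finite-generation theorem and the Reynolds operator as the reductive inputs --- is precisely the standard argument given in that reference, and it is correct.
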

By Proposition \ref{ccorproj} and Corollary \ref{conjact}, the moduli space of correspondences $\ccor_{d,e}$ is isomorphic to $\proj(V_d \otimes V_e)$ equivariantly with respect to $\slt$-actions. By Remark \ref{linzrep}, an $\slt$-linearization is given by the natural representation over $H^0(\proj (V_d \otimes V_e) , \oshf (1) ) \simeq (V_d \otimes V_e)^*$, the dual representation of $V_d \otimes V_e$. The last representation is isomorphic to $V_d \otimes V_e$ by Proposition \ref{dualrep}. By Theorem \ref{constquot}, the uniform categorical quotient $\ccor_{d,e}^{ss} \sslash \slt$ is constructed as $\fct{Proj} k[V_d \otimes V_e]^{\slt }$.

The reductive group $\pglt$ also has a $G$-linearization on $\oshf (\frac{2}{\gcd(2,d+e)} )$ by Proposition \ref{CGdec} and Corollary \ref{conjact}. In fact, we have $ k[V]^{\slt } =  k[V]^{\pglt } $ for any finite dimensional representation $V$ of $\pglt$, therefore $\ccor_{d,e} \sslash \pglt \simeq \fct{Proj} \bigoplus_{i=0}^{\infty } k[V_d \otimes V_e ]_{2i }^{\pglt} \simeq \fct{Proj} \bigoplus_{i=0}^{\infty } k[V_d \otimes V_e ]^{\slt}$.
\begin{thm} A divisorial correspondence $C$ given by a quasi-divisorial correspondence $\sum_{i=0}^d \sum_{j = 0}^e a_{ij}x^iy^j $ is \emph{not} a stable point (resp. \emph{not} a semistable point) of $\ccor_{d,e}$ if and only if there exists an $\slt$-conjugate of the coefficient matrix $(b_{ij})$ such that $b_{ij} = 0$ for all $i+j < \frac{d+e}{2}$ (resp. $i+j \leq \frac{d+e}{2}$).
\end{thm}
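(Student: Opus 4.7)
The plan is to apply the Hilbert--Mumford numerical criterion to the $\slt$-action on $\proj(V_{d,e}) \simeq \proj(V_d \otimes V_e)$ provided by Corollary \ref{conjact}, with its canonical linearization on $\oshf(1)$. First I would reduce to a single standard 1-parameter subgroup modulo $\slt$-conjugation: every non-trivial $\lambda : \gmult \to \slt$ factors through a maximal torus; all maximal tori are $\slt$-conjugate to the diagonal one containing $c$; reparametrization identifies any $c^n$ with $n \neq 0$ with $c$ or $c^{-1}$; and $c^{-1} = wcw^{-1}$ for the Weyl element $w = \binom{0\ \ 1}{-1\ \ 0} \in \slt$. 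Hence it suffices to analyze destabilizing 1-PSGs of the form $\lambda = gc^{-1}g^{-1}$ with $g \in \slt$.

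Next I would record the weight decomposition. Arguing exactly as in the proof of Proposition \ref{irr}, the monomial $x_0^{d-i}x_1^{i}y_0^{e-j}y_1^{j}$ has weight $(d+e)-2(i+j)$ under $c$, hence weight $2(i+j)-(d+e)$ under $c^{-1}$. The Hilbert--Mumford criterion states that $[v] \in \proj(V_{d,e})$ fails to be semistable (resp. stable) iff there is a 1-PSG $\lambda$ and a lift $v$ all of whose nonzero weight components under $\lambda$ have strictly positive (resp. non-negative) weight. Taking $\lambda = gc^{-1}g^{-1}$ and setting $v' := g^{-1}v$, we have that $\lambda$ destabilizes $v$ iff $c^{-1}$ destabilizes $v'$; writing $(b_{ij})$ for the coefficient matrix of $v'$ (an $\slt$-conjugate of $(a_{ij})$), the condition becomes $b_{ij} = 0$ whenever $2(i+j)-(d+e) \leq 0$ (non-semistable case), resp.\ $b_{ij} = 0$ whenever $2(i+j)-(d+e) < 0$ (non-stable case), which unpacks to the stated conditions $i+j \leq \frac{d+e}{2}$ and $i+j < \frac{d+e}{2}$.

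The reverse direction is immediate from the same weight computation: any coefficient matrix $(b_{ij})$ with the stated vanishing pattern yields $gc^{-1}g^{-1}$ as an explicit destabilizing 1-PSG for $[v]$, where $g$ is the element witnessing the $\slt$-conjugacy. The main obstacle is purely bookkeeping---matching the sign convention ($c$ vs $c^{-1}$) and the strictness of inequalities in Hilbert--Mumford with the index pattern on $b_{ij}$, and confirming the reduction to a single 1-PSG $c^{-1}$ up to $\slt$-conjugation; no substantive computation beyond the elementary weight formula is needed.
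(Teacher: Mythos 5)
Your proposal is correct and follows essentially the same route as the paper: reduce to the diagonal one-parameter subgroup $c$ up to $\slt$-conjugacy, read off the weight $(d+e)-2(i+j)$ of the monomial $x_0^{d-i}x_1^{i}y_0^{e-j}y_1^{j}$, and apply the Hilbert--Mumford numerical criterion. The paper's proof is a terser version of the same argument; your extra care with the $c$ versus $c^{-1}$ bookkeeping, needed to match the direction of the inequalities in the statement, is a detail the paper leaves implicit.
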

\begin{proof}
By \cite{Sil}, any maximal subtorus of $\fct{SL}_2(k)$ is conjugate to $c : \gmult \to \fct{SL}_2(k)$ such that $c(t) := \binom{t\ \ 0\ }{0\ t^{-1}}$. For a divisorial quasi-correspondence $f$, we have
\[ c(t) \cdot f = f(tx_0,t^{-1}x_1 , ty_0 , t^{-1}y_1 ) = \sum_{i=0}^d \sum_{j = 0}^e t^{(d+e) - 2(i+j)}a_{ij}x^iy^j. \] Therefore by the numerical criterion (\cite[Theorem 2.1]{MF}), we obtain the claim.
\end{proof}

\subsection{Rationality}
Let $V = V' \oplus V''$ be a representation of a reductive group $G$. Then we have the inclusion morphism $k[V'] \to k[V]$. This morphism is $G$-equivariant by definition, therefore leads to the morphism $k[V']^G \to k[V]^G$ and the rational map $\proj (V^*)^{ss} \sslash  G \dashrightarrow \proj ((V')^*)^{ss} \sslash  G$. If the action of the group $G$ on $(V')^*$ is free for general point, then the fiber of general point of $\proj((V')^*)^{ss} \sslash  G$ via $\proj(V^*)^{ss} \sslash G \ratmap \proj ((V')^*) \sslash  G$ is naturally isomorphic to $(V/V')^*$. Therefore we have the following proposition.
\begin{prop}\label{ratfib} For a representation of a reductive group $G$ and a representation $V$, $\proj (V^*)\sslash G$ is rational if there exists a subrepresentation $V' \subset V$ such that the action of $G$ on $V'$ is generically free and $\proj((V')^*)\sslash G$ is rational.
\end{prop}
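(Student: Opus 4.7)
The plan is to realize $\proj(V^*)\sslash G$ birationally as an affine-space bundle over $\proj((V')^*)\sslash G$, so that rationality of the base propagates to the total space.

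Since $G$ is reductive, the subrepresentation $V'\subset V$ admits a $G$-equivariant complement $V''$, so that $V = V'\oplus V''$ and dually $V^* = (V')^*\oplus (V'')^*$. The projection $V^*\to (V')^*$ is $G$-equivariant and corresponds to the inclusion of graded subalgebras $k[V']\hookrightarrow k[V]$ already noted before the statement. Passing to $G$-invariants yields $k[V']^G \hookrightarrow k[V]^G$, hence an affine morphism $V^*\sslash G \to (V')^*\sslash G$ together with the projective rational map $\proj(V^*)\sslash G \ratmap \proj((V')^*)\sslash G$.

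Next I identify this morphism generically as a vector bundle. The generic-freeness hypothesis supplies a dense $G$-stable open $U\subset (V')^*$ on which $G$ acts with trivial stabilizer; thus $U\to U/G$ is a principal $G$-bundle. The preimage of $U/G$ under $V^*\sslash G\to (V')^*\sslash G$ is the categorical quotient of $U\times (V'')^*$ by the diagonal action, and because stabilizers on $U$ are trivial each $G$-orbit in $U\times (V'')^*$ meets every slice $\{v'\}\times (V'')^*$ with $v'\in U$ in a single point. Hence this quotient is the associated bundle $U\times_G (V'')^*$, a Zariski-locally trivial rank-$\dim V''$ vector bundle over $U/G$, in particular birational to $(U/G)\times \af^{\dim V''}$.

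To pass from affine to projective quotients, note that each of $V^*\sslash G$ and $(V')^*\sslash G$ is a cone over its projectivization and the punctured version is a birationally trivial $\gmult$-bundle; hence $V^*\sslash G$ is birational to $\proj(V^*)\sslash G\times \af^1$, and similarly on the $(V')^*$-side. Combining with the previous step, $\proj(V^*)\sslash G$ is birational to $\proj((V')^*)\sslash G \times \af^{\dim V''}$, and the rationality hypothesis on the base gives rationality of the total space. The delicate point is the identification of the generic fibre as the associated bundle $U\times_G (V'')^*$ rather than some a priori only surjecting quotient of it; trivializing $U\to U/G$ Zariski-locally, or appealing to Luna's slice theorem at a point of trivial stabilizer, removes this ambiguity cleanly.
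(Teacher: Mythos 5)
Your proposal is correct and follows essentially the same route as the paper, which likewise decomposes $V = V'\oplus V''$, passes to $k[V']^G\hookrightarrow k[V]^G$, and identifies the generic fibre of $\proj(V^*)\sslash G\ratmap \proj((V')^*)\sslash G$ with the affine space $(V/V')^*$; you simply supply the details (associated bundle, local triviality, the cone-versus-projectivization step) that the paper leaves implicit. One small caution: for non-special $G$ the principal bundle $U\to U/G$ need not trivialize Zariski-locally, so the clean justification is that the associated \emph{vector} bundle $U\times_G (V'')^*$ is Zariski-locally trivial by descent (Hilbert 90), or your alternative appeal to the Luna slice theorem.
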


We fix a infinity field $k$. Let $1 \leq d \leq e$ be positive integers. Then the Clebsch-Gordan decomposition (Proposition \ref{CGdec}) gives
\[ \dim_k \fct{Hom}_{\fct{Rep}_k(\fct{SL}_2 )}( V_{d+e-1} \otimes V_1 , V_d \otimes V_e ) = 2. \]
We fix an injective morphism
\[ \rho_1 : V_{d+e-1} \otimes V_1 \to V_d \otimes V_e .\]
The morphism $\rho_1$ gives rational maps
\[ \rho_1 : \ccor_{d,e} \dashrightarrow \ccor_{1, d+e-1} \]
and
\[\bar{ \rho_1} : \dyn_{d,e} \dashrightarrow \dyn_{1,d+e-1}. \]
\begin{prop}\label{rational} $\dyn_{d,e}$ is rational for $d, e \geq 1$ and $(d,e) \neq (1,1)$.
\end{prop}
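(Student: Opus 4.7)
The plan is to apply Proposition~\ref{ratfib} with $G = \pglt$, ambient representation $V = V_d \otimes V_e$ (the $\pglt$-representation underlying $\ccor_{d,e}$ by Corollary~\ref{conjact}), and subrepresentation $V' := \rho_1(V_{d+e-1} \otimes V_1)$ carved out by the Clebsch--Gordan inclusion $\rho_1$ already fixed above. To set this up, I would first observe that Proposition~\ref{CGdec} gives
\[ V_d \otimes V_e \simeq V_{d+e} \oplus V_{d+e-2} \oplus \cdots \oplus V_{|d-e|}, \qquad V_{d+e-1} \otimes V_1 \simeq V_{d+e} \oplus V_{d+e-2}, \]
so $V'$ is exactly the direct summand of $V$ consisting of the two highest-weight pieces (note that $d, e \geq 1$ forces $|d-e| \leq d+e-2$, so both pieces genuinely appear in $V$); Proposition~\ref{even} provides the $\pglt$-compatibility since all summands involved have the same parity as $d+e$.

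Next I would verify the two hypotheses of Proposition~\ref{ratfib}. For the rationality of $\proj((V')^*) \sslash \pglt$: this quotient is identified with $\dyn_{1, d+e-1}$, and because $(d,e) \neq (1,1)$ with $d,e \geq 1$ forces $d+e-1 \geq 2$, this is the moduli of degree-$\geq 2$ endomorphisms of $\pone$, which is rational by \cite{Le}. For the generic freeness of the $\pglt$-action on $V'$: equivalently, a generic degree-$(d+e-1)$ self-map $f \colon \pone \to \pone$ has trivial $\pglt$-stabilizer. Any $g \in \pglt$ commuting with such an $f$ must permute the fixed points of $f$ and preserve their multipliers, so a generic $f$ whose fixed points have pairwise distinct multipliers (none equal to $1$) has trivial stabilizer.

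Proposition~\ref{ratfib} then immediately delivers the rationality of $\dyn_{d,e}$. The main obstacle will be establishing generic freeness: the stable-locus characterization of the previous subsection only yields finite stabilizers, and promoting ``finite'' to ``trivial'' requires the multiplier-based argument above (together with upper-semicontinuity of the stabilizer dimension, applied to a single explicit example). All remaining ingredients — the identification of $V'$ as the correct subrepresentation, the recognition of the quotient as $\dyn_{1, d+e-1}$, and the invocation of Levy's rationality result — are formal consequences of material developed earlier in the paper.
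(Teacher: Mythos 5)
Your proposal matches the paper's proof in its essentials: both apply Proposition~\ref{ratfib} to the Clebsch--Gordan summand $V' = V_{d+e-1}\otimes V_1 \simeq V_{d+e}\oplus V_{d+e-2}$ of $V_d\otimes V_e$ cut out by $\rho_1$, and both invoke Levy's rationality of $\dyn_{1,d+e-1}$. The only (harmless) divergence is in the generic-freeness hypothesis: the paper simply cites \cite{Le} for generic freeness of the action on $V_D\otimes V_1$ for $D\geq 3$ (treating $d=1$ separately), whereas you supply a direct argument via pairwise distinct fixed-point multipliers, which in fact also covers the boundary case $d+e-1=2$ uniformly.
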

\begin{proof}
For the case $d=1$, this is shown in \cite{Le}. In the same paper, it is also shown $\slt(k)$ acts generically free on the representation $V_D \otimes V_1$ for $D \geq 3$. Therefore the general case follows from Proposition \ref{ratfib}.
\end{proof}
\begin{rem} In \cite{Ma1}, \cite{Ma2}, \cite{Ma3}, \cite{Ma4} and \cite{Sh}, the rationality of $\ccor_{d,e} \sslash  \fct{SL}_2 \times \fct{SL}_2$ is shown for some $(d,e)$'s. The rationality of $\fct{SL}_2$ leads to the rationality of $\ccor_{d,e}$ for these cases.
\end{rem}

\section{Multiplier Map}

\subsection{Construction}
Let $f$ be a divisorial quasi-correspondence \[ f(x_0,x_1,y_0,y_1) = \sum_{0 \leq i \leq d,\ 0 \leq j \leq e} a_{ij}x_0^{d-i}x_1^{i}y_0^{e-j}y_1^j. \]
We fix affine coordinates $x = \frac{x_1}{x_0}$ and $y = \frac{y_1}{y_0}$. Then the restriction of the divisorial quasi-correspondence given by $f$ on the affine chart $\aff_x \times \aff_y = U_{\pone_x}^+ (x_0) \times U_{\pone_y}^+ (y_0)$ of $\pone_x \times \pone_y$ is given by $V( \bar{f}(x,y) ) $, where \[ \bar{f}(x,y) := \sum_{0 \leq i \leq d ,\ 0 \leq j \leq e} a_{ij}x^i y^j. \] Intuitively, from implicit function theorem, the derivative (we denote by) $\frac{dy}{dx}$ on $V(\bar{f}(x,y))$ around a point $(\bar{x} ,\bar{y} ) \in V(\bar{f}(x,y))$ is given by 
\begin{equation} \frac{dy}{dx} = - \left. \frac{\partial_x \bar{f}(x,y)}{ \partial_y \bar{f}(x,y)} \right|_{(x,y) = (\bar{x} , \bar{y} )}.  \label{impthm}
\end{equation}
if $ \left. \partial_y \bar{f}(x,y) \right|_{(x,y) = (\bar{x} , \bar{y} )} \neq 0$. In fact, from the universality of differential sheaf, if we change affine coordinates of each components equivariantly, the value $\frac{dy}{dx}$ does not changes.
% that is, the inclusion morphism $\af^1_x \times \af^1_y \to \pone_x \times \pone_y$.
We use (\ref{impthm}) as the definition of the derivative $\frac{dy}{dx}$ of $f$ at $(\bar{x},\bar{y}) \in V(f) \setminus V(\partial_yf)$.
We note that here
\begin{align*} x_0^dy_0^e \partial_x \bar{f}(x,y) &= \partial_{x_1}f(x, y) = d\cdot  f (x,y) - \partial_{x_0}f(x,y)\ \text{and} \\
 x_0^dy_0^e \partial_y \bar{f}(x,y) &= \partial_{y_1}f(x, y) = d\cdot  f (x,y) - \partial_{y_0}f(x, y) \end{align*}
as rational functions of $x_0, x_1, y_0$ and $y_1$. Therefore, at a point $(\bar{x},\bar{y}) \in V(f) \setminus V(\partial_yf)$, we have
\begin{align*}
\frac{dy}{dx} = -\frac{ \partial_{x_1}f(\bar{x},\bar{y})}{ \partial_{y_1}f(\bar{x},\bar{y})} = -\frac{ -\partial_{x_0}f(\bar{x},\bar{y})}{ -\partial_{y_0}f(\bar{x},\bar{y})}. \end{align*}
For convenience, we define rational functions $\partial_x f(x,y) $ and $\partial_yf(x,y)$ as 
\begin{align} x \partial_x f(x,y) &= x_1 \partial_{x_1}f(x,y) - x_0\partial_{x_0}f(x,y) \text{ and } \nonumber \\  
 y \partial_y f(x,y) &= y_1 \partial_{y_1}f(x,y) - y_0\partial_{y_0}f(x,y). \label{chain} \end{align}

We construct the symmetric form of the fixed point multipliers $\sigma_i(f)$ by
\begin{equation}
1 + \sum_{i=1}^{d+e} (-1)^i \sigma_i(f) t^i = \prod_{z : f(z,z) = 0}\left( 1 + \frac{\partial_x f(z,z)}{\partial_y f(z,z)} t \right). \label{symformmult}
\end{equation} 
The map $\ccor_{d,e} \dashrightarrow \af^{d+e}$ given by $f \mapsto (\sigma_i(f) )_{i=1,\ldots d+e}$ is a morphism if $d=1$ and $f$ is a morphism, but is just a rational map for $d \geq 2$ because there are points such that $\partial_y f (z) = 0$. To incorporate $y$-critical points, then we prefer to consider the following homogenized form of the multiplier map:
\begin{deff} \label{multmap}
{\em The fixed point multiplier map} is the rational map
\[ \lambda_{1,(d,e)} : \ccor_{d,e} \ni f \mapsto [\fct{res}_z (f(z,z) , \partial_x f(z,z) dx + \partial_y f(z,z)dy )] \in \proj (D_{d+e}), \]
where we regard $dx$ and $dy$ as just a variables and $D_{n}$ be the space of $n$-ary forms in them.

{\em The $n$-th multiplier map} is the rational map $\lambda_{n ,(d,e)} := \lambda_{1,(d^n , e^n)} \circ \Psi_n$. 
\end{deff}
\begin{rem}
By some elementary transformation of Sylvester matrix, we have a fundamental property of resultant
\[ \fct{res}_x (f(x) , g(x) + af(x) ) = \fct{res}_x (f(x) , g(x)) \]
for any $f , g \in k[x]$ and $a \in k$.
For any divisorial quasi-correspondence $f(x,y)$, this property of resultant and $x_1\partial_{x_1} f(x,y) = df(x,y) - x_0\partial_{x_0}f(x,y)$ shows the equality
\[ [\fct{res}_z (f(z,z) , \partial_x f(z,z) dx + \partial_y f(z,z)dy )] = [\fct{res}_z (f(z,z) , z_0 \partial_{x_0} f(z,z) dx + z_0\partial_{y_0} f(z,z)dy )] \]
of points of $\proj (D_{d+e})$.
\end{rem}

\subsection{Resultant Form of Woods Hole Formula}
Let $f$ be a divisorial quasi-correspondence of degree $(d,e)$. Then we have
\begin{align*}
z(\partial_x f) (z,z) & = \sum_{i,j} (d-2i) a_{ij}z^{i+j} \text{ and } \\
z(\partial_y f)(z,z) & = \sum_{i,j} (e-2j) a_{ij}z^{i+j}
\end{align*}
from (\ref{chain}) and 
\begin{align*}
z\partial_z (\Omega^0f)(z) & = \sum_{i,j}(d+e -2i-2j)a_{ij}z^{i+j} \text{ and }\\
z_0z_1(\Omega^1 f)(z) & = \sum_{i,j} (-ei + dj)a_{ij}z^{i+j}
\end{align*}
from the definition of Cayley operator (\ref{omega}).
This leads to
\[\left( \begin{matrix} z \partial_z (\Omega^0 f)(z) \\ z_0z_1 (\Omega^1f)(z)  \end{matrix}\right) = \left( \begin{matrix} 1 & 1 \\ \frac{e}{2} & \frac{-d}{2} \end{matrix} \right) \left(\begin{matrix} z(\partial_x f)(z,z) \\ z(\partial_y f)(z,z) \end{matrix}\right)  \]
and
\begin{eqnarray*} \left( \begin{matrix}z (\partial_x f)(z,z) \\ z(\partial_y f)(z,z)  \end{matrix} \right) &= \frac{-2}{d+e} \left(\begin{matrix} \frac{-d}{2} & -1 \\ \frac{-e}{2} & 1 \end{matrix}\right) \left(\begin{matrix}z \partial_z (\Omega^0 f)(z) \\ z_0z_1 (\Omega^1f)(z) \end{matrix}\right) \\
&=  \left(\begin{matrix} \frac{d}{d+e} & \frac{2}{d+e} \\ \frac{e}{d+e} & -\frac{2}{d+e} \end{matrix}\right) \left(\begin{matrix}z \partial_z (\Omega^0 f)(z) \\ z_0z_1 (\Omega^1f)(z) \end{matrix}\right). \\
 \end{eqnarray*}
This coordinate change gives the following proposition.
\begin{prop}\label{unif1} There exists a projective linear morphism $A \in \fct{Aut} (\proj^{d+e}) = \fct{PGL}_{d+e}(k)$ which makes the following diagram commutative:
\[
\begin{tikzcd} \fct{Dyn}_{d,e} \arrow[rr,dashed]{}{ \lambda_{1,(d,e)} } \arrow[d,dashed]{}{\bar{\rho}_1} & & \proj(D_{d+e}) \arrow{d}{A} \\
\fct{Dyn}_{1,d+e-1} \arrow[rr,dashed]{}{\lambda_{1,(1,d+e-1)}} & & \proj(D_{d+e}).
\end{tikzcd}
\]
\end{prop}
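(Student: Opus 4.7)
The plan is to apply the $2\times 2$ matrix identity displayed immediately above the proposition to both degree pairs $(d,e)$ and $(1,d+e-1)$, and to observe that the two resulting presentations of the multiplier map differ only by a linear change of coordinates in $(dx,dy)$ which is independent of $f$.

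First I would interpret $\bar\rho_1$ representation-theoretically via Clebsch--Gordan (Proposition~\ref{CGdec}): under the decomposition $V_d\otimes V_e=\bigoplus_{i=0}^{|d-e|}V_{d+e-2i}$ realized by the operators $\Omega^i$, the factor $V_1\otimes V_{d+e-1}\simeq V_{d+e}\oplus V_{d+e-2}$ embeds as the first two summands. By Schur's lemma (using Proposition~\ref{irr}), the chosen injection $\rho_1$ is necessarily diagonal on these two common isotypical components, so there exist nonzero scalars $c_0,c_1\in k^\times$ depending only on $\rho_1$ such that for $g:=\bar\rho_1(f)$ one has
\[\Omega^0 g=c_0\,\Omega^0 f\qquad\text{and}\qquad\Omega^1 g=c_1\,\Omega^1 f.\]

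Next I would apply the displayed identity in both degree situations. For any divisorial quasi-correspondence $h$ of degree $(a,b)$ with $a+b=d+e$, that identity lets me rewrite $z\bigl(\partial_x h\,dx+\partial_y h\,dy\bigr)(z,z)$ as $\alpha_{a,b}(dx,dy)\cdot z\partial_z(\Omega^0 h)+\beta_{a,b}(dx,dy)\cdot z_0z_1\,\Omega^1 h$, where $(\alpha_{a,b},\beta_{a,b})=(dx,dy)\,M_{a,b}$ and $M_{a,b}$ is the explicit invertible $2\times 2$ matrix with $\det=-2/(a+b)$. Since multiplying the second argument of a resultant by $z$ merely rescales it by a power, substituting back into Definition~\ref{multmap} presents both $\lambda_{1,(d,e)}(f)$ and $\lambda_{1,(1,d+e-1)}(g)$ as the evaluation of the single $D_{d+e}$-valued polynomial
\[R_f(\alpha,\beta):=\fct{res}_z\bigl(\Omega^0 f,\;\alpha\cdot\partial_z(\Omega^0 f)+\beta\cdot z_0z_1\,\Omega^1 f\bigr)\]
at two different linear functions of $(dx,dy)$, related by the invertible $2\times 2$ matrix $T:=M_{d,e}\,\fct{diag}(c_0,c_1)^{-1}\,M_{1,d+e-1}^{-1}$. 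The induced change of basis on $k\cdot dx\oplus k\cdot dy$ acts on its $(d+e)$-th symmetric power $D_{d+e}$, yielding the required $A\in\fct{PGL}_{d+e}(k)=\fct{Aut}(\proj(D_{d+e}))$, and $A$ is manifestly independent of $f$.

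The main obstacle will be the bookkeeping in the second step: confirming that $\rho_1$ is genuinely Schur-diagonal on the two common Clebsch--Gordan factors (rather than mixing them, which Schur forbids given the distinct highest weights), and tracking how the homogenizing factors $z$ and $z_0z_1$ scale cleanly out of the Sylvester resultant so that the identification by $T$ is uniform in $f$. Both points reduce to the irreducibility of $V_{d+e}$ and $V_{d+e-2}$ (Proposition~\ref{irr}) and to the standard multiplicativity of the resultant in each argument.
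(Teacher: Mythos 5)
Your proposal is correct and follows essentially the same route as the paper: it uses Schur's lemma on the two common Clebsch--Gordan components to get $\Omega^i\bar\rho_1(f)=c_i\,\Omega^i f$, rewrites both multiplier maps via the displayed $2\times 2$ coordinate change as the same resultant evaluated on two bases of $k\cdot dx\oplus k\cdot dy$, and takes $A$ to be the induced automorphism of $\proj(D_{d+e})$. The paper's proof is exactly this, with the matrix $T$ written out as the map $dz^0_{(d,e)}\mapsto c_0\,dz^0_{(1,d+e-1)}$, $dz^1_{d+e}\mapsto c_1\,dz^1_{d+e}$.
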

\begin{proof}
 By the definition of $\rho_1$, we have $\Omega^i \rho_1 (f) = c_i \cdot \Omega^i f$ for any $f \in \ccor_{d,e}$ and each $i = 0,1$, where $c _i \in k^\times$ are given. Therefore for a correspondence $f \in \ccor_{d,e}$ we have 
\begin{eqnarray*} \lambda_{1, (1, d+e-1)} \circ \rho_1 (f) = \fct{res}_z ( \Omega^0f(z) ,& c_0 z\partial_z (\Omega^0 f)(z)\left( \frac{1}{d+e}dx+\frac{d+e-1}{d+e}dy \right)  \\
 &  + c_1 z_0z_1 \Omega^1f(z) \left( \frac{2}{d+e}dx - \frac{2}{d+e}dy \right) )
\end{eqnarray*}
and
\begin{eqnarray*} \lambda_{1, (d, e)} (f) = \fct{res}_z ( \Omega^0f(z) , &  z\partial_z (\Omega^0 f)(z)\left( \frac{d}{d+e}dx+\frac{e}{d+e}dy \right)  \\
 &  +  z_0z_1 \Omega^1f(z) \left( \frac{2}{d+e}dx - \frac{2}{d+e}dy \right)  ) \end{eqnarray*}
for any correspondence $f$. We take the basis $(dz^0_{(d',e')} , dz_{d'+e'}^1 )$ of $D_1 = k \cdot dx \oplus k \cdot dy$ for each pair of positive integers $d',e'$ by 
\begin{equation} \label{dzdef} dz^0_{(d',e')} := \frac{d'}{d'+e'}dx+\frac{e'}{d'+e'}dy \text{ and } dz_{d'+e'}^1 := \frac{2}{d'+e'}dx - \frac{2}{d'+e'}dy. \end{equation}
The automorphism $A$ on $\proj (D_{d+e})$ is induced by the linear map 
\[ \left( \begin{matrix}dz^0_{(d,e)} \\  dz_{d+e}^1 \end{matrix} \right) \mapsto \left( \begin{matrix} c_0 dz^0_{(1,d+e-1)} \\ c_1 dz_{d+e}^1  \end{matrix} \right). \]
\end{proof}
\begin{rem} \label{multdeg} For a correspondence $f = \sum_{i,j} a_{ij}x^iy^j $, we have
\[ \lambda_{1, (d, e)} (f) = [\fct{res}_z ( \Omega^0f(z) ,  z\partial_z (\Omega^0 f)(z)dz^0_{(d,e)}  +  z_0z_1 \Omega^1f(z) dz^1_{d+e} )] \in \proj(D_{d+e}). \]
From the definition of resultant, we have each coefficient is divisible by $a_{00}a_{de}$.
As a point of $ \proj(D_{d+e})$, we have
\begin{equation} \lambda_{1, (d, e)} (f) = [\fct{res}_z ( \Omega^0f(z) , z\partial_z (\Omega^0 f)(z)dz^0_{(d,e)}  +  z_0z_1 \Omega^1f(z) dz^1_{d+e} )/a_{00}a_{de}]. \label{invmult} \end{equation}
We write $[(dz^0)^{d+e-i} (dz^1)^i]$ for the homogeneous coordinate of $\proj(D_{d+e})$ given by taking the coefficient of $(dz^0_{d,e})^{d+e-i} (dz^1_{d+e})^i$ for any nonzero element of $D_{d+e}$ as a polynomial of variables $dz^0_{d,e}$ and $dz^1_{d+e}$ defined in (\ref{dzdef}).
In this form, we have the coordinate $[(dz^1_{d+e})^{d+e}]$ of (\ref{invmult}) is
\[ \fct{res}_z(\Omega^0f(z) , z_0z_1 \Omega^1f(z) )/a_{00}a_{de} = \fct{res}_z(\Omega^0f(z) , \Omega^1f(z) ),\]
therefore $\slt$-invariant by Proposition \ref{resequiv}. From the invariance of the multiplier map, the other coordinate functions of (\ref{invmult}) are $\slt$-invariant on $\ccor_{d,e}$ of degree $2(d+e-1)$.
\end{rem}
\begin{thm}\label{hypcoord} The image of $\lambda_{1,(d,e)} : \dyn_{d,e} \to \proj (D_{d+e})$ is the hyperplane defined by
\[ [(dz^0)^{d+e-1} (dz^1)^1] = 0, \]
where $[(dz^0)^{d+e-1} dz^1]$ is the coordinate function of $(dz_{(d,e)}^0)^{d+e-1} dz_{d+e}^1$.
\end{thm}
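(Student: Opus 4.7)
The strategy is to identify the coefficient $[(dz^0)^{d+e-1}(dz^1)^1]$ as a symmetric function of the fixed-point multipliers $\nu_i$ whose vanishing is exactly the Woods Hole formula for correspondences, and then derive equality with the hyperplane using Proposition \ref{unif1}.

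First I would start from the resultant form in Remark \ref{multdeg}. Set $F = \Omega^0 f$, $G = z\partial_z(\Omega^0 f)$, and $H = z_0 z_1 \Omega^1 f$, all of degree $d+e$ in $z$. For generic $f$ with $F$ separable, the resultant factors as
\[ \fct{res}_z\bigl(F,\; G\, dz^0 + H\, dz^1\bigr) \;=\; \fct{lc}(F)^{d+e}\prod_{i=1}^{d+e}\bigl(G(z_i)\, dz^0 + H(z_i)\, dz^1\bigr), \]
where $z_i$ are the fixed points (roots of $F$). Expanding, the coefficient of $(dz^0)^{d+e-1}(dz^1)^1$ equals $\fct{lc}(F)^{d+e}\prod_j G(z_j)\cdot\sum_i H(z_i)/G(z_i)$, so the vanishing of this coefficient on the whole image reduces (by Zariski closure) to showing $\sum_i H(z_i)/G(z_i) = 0$ on a dense open subset of $\ccor_{d,e}$.

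I would then apply the matrix identity from the proof of Proposition \ref{unif1} at each fixed point, setting $\nu_i := (\partial_x f/\partial_y f)(z_i, z_i)$ as in (\ref{symformmult}):
\[ \frac{H(z_i)}{G(z_i)} \;=\; \frac{e\nu_i - d}{2(\nu_i+1)} \;=\; \frac{e}{2} \,-\, \frac{d+e}{2(1+\nu_i)}. \]
Summing yields $\sum_i H(z_i)/G(z_i) = \tfrac{d+e}{2}\bigl(e - \sum_i 1/(1+\nu_i)\bigr)$. The Woods Hole formula for correspondences on $\pone$ (\cite{A-B}, \cite{Ill}) asserts $\sum_{i=1}^{d+e}1/(1+\nu_i) = e$, this being the holomorphic Lefschetz number of $C$ (the trace of the induced action on $H^0(\pone, \oshf)$, equal to the fiber-degree of $p_1\colon C \to \pone$). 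This gives the vanishing, hence the containment of the image in the hyperplane $[(dz^0)^{d+e-1}(dz^1)^1] = 0$.

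For the reverse inclusion, I would invoke Proposition \ref{unif1}: the automorphism $A$ there is diagonal in the $(dz^0, dz^1)$-basis, so it preserves every coordinate hyperplane of $\proj(D_{d+e})$, in particular the one at hand. Dominance of $\bar{\rho_1}$ (a generic affine bundle by the proof of Proposition \ref{rational}) reduces the dominance statement to the classical self-map case $\lambda_{1,(1,D)}$ with $D=d+e-1$, where the multiplier map is known to dominate the Woods Hole hyperplane (see \cite{Mi}, \cite{Sil}). The main technical obstacle is the correct identification of the Woods Hole constant $e$ (rather than $d$) in the fixed-point sum: the asymmetric choice $\nu = \partial_x f/\partial_y f$ in (\ref{symformmult}) forces this particular constant, and a sign or orientation error would pick out a different coordinate hyperplane entirely.
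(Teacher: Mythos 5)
Your proof is correct, but the containment half runs in the opposite logical direction from the paper's. The paper uses Proposition \ref{unif1} to reduce \emph{everything} to the self-map case $\dyn_{1,d+e-1}$, and there verifies by an explicit binomial-coefficient computation (via the pairing $\beta_{d+1}$) that the coordinate hyperplane $[(dz^0)^{d+e-1}dz^1]=0$ is exactly the classical index relation (\ref{index}); the correspondence-level statement is then deduced, which is why the introduction can claim that this machinery yields an \emph{alternative proof} of the Woods Hole formula for correspondences, and why Corollary \ref{repWH} comes out depending only on the classical self-map index theorem. You instead prove the containment directly for all $(d,e)$ by factoring the resultant over the roots of $\Omega^0 f$ and feeding the fixed-point sum into the correspondence Woods Hole formula of \cite{A-B} and \cite{Ill}. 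Your computation is right: the matrix identity gives $H(z_i)/G(z_i)=\tfrac{e}{2}-\tfrac{d+e}{2(1+\nu_i)}$, and the constant in $\sum_i 1/(1+\nu_i)=e$ is indeed $e$ for the operator $h\mapsto\sum_{y:f(x,y)=0}h(y)$ with local multiplier $dy/dx=-\nu$ (it checks out against both degenerate cases $(D,1)$ and $(1,D)$). What your route buys is a cleaner, basis-free explanation of \emph{why} that particular coordinate vanishes; what it costs is circularity relative to the paper's advertised program — you may not then turn around and present Theorem \ref{hypcoord} as a proof of the correspondence index formula or of Corollary \ref{repWH} without importing \cite{A-B}/\cite{Ill} as a black box. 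Two smaller points: your claim that $A$ ``preserves every coordinate hyperplane'' should be phrased carefully, since $A$ sends the monomial basis built from $dz^0_{(d,e)}$ to the one built from $dz^0_{(1,d+e-1)}$ (it matches the source hyperplane to the target hyperplane, which is what you need, but they are hyperplanes with respect to different bases of $D_1$); and for dominance onto the hyperplane in the self-map case the references \cite{Mi} and \cite{Sil} only cover low degree — the paper cites \cite{Gor2} (Gorbovickis) for the general statement, and you should too.
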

\begin{proof}
By Proposition \ref{unif1}, it is enough to show the assertion for $e=1$, the case of the moduli of maps. We show the index theorem (\ref{index}) is equivalent to the hyperplane $ [(dz^0)^{d} (dz^1)^1] = 0 $ by the coordinate change.
The coordinate change is induced by the coordinate change
\[ \left(\begin{matrix} dz^0 \\ dz^1 \end{matrix} \right) = \left(\begin{matrix} \frac{d}{d+1} & \frac{2}{d+1} \\ \frac{1}{d+1} & -\frac{2}{d+1} \end{matrix}\right) \left(\begin{matrix} dx \\ dy \end{matrix} \right) \]
on $D_1$, which induces a coordinate change
\[ \left(\begin{matrix} [dz^0] \\ [dz^1] \end{matrix} \right) = \left(\begin{matrix} \frac{d}{d+1} & \frac{2}{d+1} \\ \frac{1}{d+1} & -\frac{2}{d+1} \end{matrix}\right)^{-1} \left(\begin{matrix} [dx] \\ [dy] \end{matrix} \right) = \left( \begin{matrix} 1 & 1 \\ \frac{1}{2} & \frac{-d}{2} \end{matrix} \right)\left(\begin{matrix} [dx] \\ [dy] \end{matrix} \right) \]
on $D_1^*$.

We note that $D_n$ is naturally isomorphic to $(\fct{Sym}_n(D_1^*))^*$ and therefore $D_n^*$ is naturally isomorphic to $\fct{Sym}_n(D_1^*)$. The vector space of $n$-ary form of $D_1^*$ is isomorphic to the vector space $(\fct{Sym}_{n}(D_1^*)^*)^* \simeq \fct{Sym}_{n}(D_1)^*$ and the canonical morphism $\beta_n : \fct{Sym}_{n}(D_1^*) \to (\fct{Sym}_{n}D_1)^*$ is given by $\beta_n([(ds)^i (dt)^{n-i}]) = \binom{n}{i} [ds]^i [dt]^{n-i}$ for any basis $\{ ds , dt \}$ of $D_1$ (Remark \ref{caniso}). Therefore we have
\begin{align*}
\beta_{d+1}( [(dz^0)^d (dz^1) ] ) &= \binom{d+1}{1} [dz^0]^d [dz^1] \\
&= (d+1) ( [dx] + [dy] )^d (\frac{1}{2} [dx] - \frac{d}{2}[dy]) \\
&= \frac{d+1}{2} \left( \sum_{i = 0}^{d+1} \left( d\cdot \binom{d}{i-1} - \binom{d}{i} \right) [dx]^i [dy]^{d+1-i} \right) \\
&= \frac{d+1}{2} \left( \sum_{i = 0}^{d+1} (i-1)\binom{d+1}{i} [dx]^i [dy]^{d+1-i} \right) \\
&= \beta_{d+1} \left( \frac{d+1}{2} \sum_{i=0}^{d+1} (i-1) [(dx)^i (dy)^{d+1-i}] \right),
\end{align*}
where we used
\begin{align*} d\cdot \binom{d}{i-1} - \binom{d}{i} &= \binom{d+1}{i}\left(d\frac{i}{d+1} - \frac{d+1-i}{d+1} \right) = (i-1)\binom{d+1}{i}
\end{align*}
and $\binom{d}{i} = 0$ for $i<0 , d<i$.

Therefore the hyperplane defined by $[(dz^0)^d (dz^1) ] = 0$ is the one defined by $\sum_{i=0}^{d+1} (i-1) [(dx)^i (dy)^{d+1-i}]=0 $. By (\ref{symformmult}) and Definition \ref{multmap}, this is which the index theorem (\ref{index}) defines.

The surjectivity of the multiplier map to the hyperplane is shown in \cite{Gor2}.
\end{proof}
This reduction is a representationally simplified form of Woods Hole Formula. 
\begin{cor}\label{repWH}For an arbitrary field $K$ and any polynomials $f,g \in K[x]$ of degree $\deg f \geq 3$ and $\deg f \geq \deg g +2$, 
\[ \left. \frac{\partial }{\partial t} \fct{res}_x(f(x) , f'(x) + t g(x)) \right|_{t = 0} = 0. \]
\end{cor}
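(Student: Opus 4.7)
The plan is to deduce this identity from Theorem \ref{hypcoord} by realizing $(f, g)$ as the first two Clebsch--Gordan components of a divisorial quasi-correspondence. Since $\frac{\partial}{\partial t}\fct{res}_x(f, f'+tg)|_{t=0}$ is a universal polynomial in the coefficients of $f$ and $g$, it will suffice to verify the vanishing for generic $(f,g)$ over $\bar{\mathbb{Q}}$; the resulting polynomial identity then descends to an arbitrary field $K$.

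First I would set $n := \deg f \geq 3$ and pick a splitting $n = d + e$ with $d, e \geq 1$, e.g.\ $(d, e) = (1, n-1)$. By Proposition \ref{CGdec}, the Clebsch--Gordan map $\bigoplus_{i} \Omega^i : V_d \otimes V_e \xrightarrow{\sim} \bigoplus_{i} V_{n-2i}$ is an isomorphism, so after dehomogenizing (setting $z_0 = 1$ and identifying $V_m$ with polynomials of degree $\leq m$) there is a unique $h \in V_{d,e}$ with $\Omega^0 h = f$, $\Omega^1 h = g$, and $\Omega^i h = 0$ for $i \geq 2$. Theorem \ref{hypcoord} applied to $h$ then asserts that the coefficient of $(dz^0)^{n-1}(dz^1)$ in
\[
\fct{res}_z\bigl(\Omega^0 h,\; z\partial_z(\Omega^0 h)\, dz^0 + z_0 z_1\, \Omega^1 h\, dz^1\bigr)
\]
(cf.\ Remark \ref{multdeg}) vanishes.

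To unpack this, I would pass to affine coordinates: the two arguments become $f(z)$ and $M(z)\,dz^0 + N(z)\,dz^1$, where $M(z) = n f(z) - 2 z f'(z)$ (from the paper's explicit formula for $z\partial_z(\Omega^0 h)$) and $N(z) = z g(z)$. The sought coefficient equals $\left.\frac{\partial}{\partial v}\fct{res}_z(f, M + vN)\right|_{v=0}$. By the identity $\fct{res}(f, A + cf) = \fct{res}(f, A)$ one replaces $M$ by $-2zf'$, and by the product formula $\fct{res}(f, P) = \mathrm{lc}(f)^{\deg P}\prod_{f(\alpha)=0} P(\alpha)$ the coefficient becomes, on the generic locus where $f$ has simple roots and $f(0), \mathrm{lc}(f) \neq 0$,
\[
(-2)^{n-1}(-1)^n f(0)\,\mathrm{lc}(f)^{n-1} \sum_{\alpha} g(\alpha) \prod_{\beta \neq \alpha} f'(\beta),
\]
the factor $\prod_\alpha \alpha = (-1)^n f(0)/\mathrm{lc}(f)$ arising from $M(\alpha) = -2\alpha f'(\alpha)$ and $N(\alpha) = \alpha g(\alpha)$. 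Applying the same product formula to $\fct{res}_z(f, f' + tg) = \mathrm{lc}(f)^{n-1}\prod_\alpha (f'(\alpha) + t g(\alpha))$ and differentiating at $t = 0$ yields $\mathrm{lc}(f)^{n-1}\sum_\alpha g(\alpha)\prod_{\beta\neq\alpha} f'(\beta)$. Hence the derivative in question vanishes on the generic locus, and by universality the identity holds for all $(f,g)$ over any field.

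The main obstacle is the coefficient-unpacking step: because $z\partial_z(\Omega^0 h)$ equals $nf - 2zf'$ rather than $f'$ directly, and the extra $z_0 z_1$ factor contributes a $\prod_\alpha \alpha$ after dehomogenization, one must track the scalar factors carefully to see that the overall universal constant is nonzero on the generic locus, so that the vanishing furnished by Theorem \ref{hypcoord} genuinely forces $\sum_\alpha g(\alpha)\prod_{\beta\neq\alpha} f'(\beta) = 0$. Everything else is elementary manipulation of the product formula.
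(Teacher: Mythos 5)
Your proposal is correct and follows essentially the same route as the paper: realize $(f,g)$ as the $\Omega^0$- and $\Omega^1$-components of a degree-$(1,\deg f-1)$ (the paper uses $(\deg f-1,1)$) quasi-correspondence via Proposition \ref{CGdec}, apply Theorem \ref{hypcoord} together with Remark \ref{multdeg}, and descend to arbitrary characteristic by the $\zahl$-integrality of the resultant. Your explicit bookkeeping of the factors $z\partial_z(\Omega^0 h)=nf-2zf'$ and $z_0z_1\Omega^1 h$ via the product formula over the roots just spells out the step the paper compresses into ``this leads the assertion.''
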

\begin{proof} We put $d := \deg f$. By Proposition \ref{CGdec}, there exists a divisorial quasi-correspondence $\widetilde{f}(x,y)$ of degree $(d-1,1)$ such that $\Omega^0 \widetilde{f} (z) = f(z)$ and $\Omega^1 \widetilde{f}(z) = g(z)$. By Remark \ref{multdeg} and Theorem \ref{hypcoord}, we have the $k$-coefficient of $t$ of the polynomial $ \fct{res}_z (f(z) , zf'(z) + tzg(z) ) \in k[t]$ is $0$. Since resultant is $\zahl$-polynomial of the coefficients of $f$ and $g$, the $K[z]$-coefficient of $t$ is $0$ in arbitrary characteristics. This leads the assertion.
\end{proof}

\end{document}